\pdfoutput=1
\documentclass[onefignum,onetabnum]{siamart171218}

\usepackage{amsfonts}
\usepackage{graphicx}
\usepackage{epstopdf}
\usepackage{algorithmic}
\ifpdf
  \DeclareGraphicsExtensions{.eps,.pdf,.png,.jpg}
\else
  \DeclareGraphicsExtensions{.eps}
\fi

\newsiamremark{remark}{Remark}
\newsiamremark{hypothesis}{Hypothesis}
\crefname{hypothesis}{Hypothesis}{Hypotheses}
\newsiamthm{claim}{Claim}

\headers{\phantom{a}}{Arnel I. Smith, Elly Do, and Chao Chen}

\title{Adaptive, Matrix-Free Low-Rank Approximation}

\author{Arnel I. Smith\thanks{Department of  Mathematics, North Carolina State University, Raleigh, NC
  (\email{aismith@ncsu.edu}).}
\and Elly Do\thanks{Department of  Mathematics, North Carolina State University, Raleigh, NC
  (\email{ntdo2@ncsu.edu}).}
\and Chao Chen\thanks{Department of  Mathematics, North Carolina State University, Raleigh, NC
  (\email{cchen49@ncsu.edu}).}
}
\usepackage{amsopn}

\ifpdf
\hypersetup{
  pdftitle={Adaptive, Matrix-Free Low-Rank Approximation},
  pdfauthor={Arnel I. Smith, Elly Do, and Chao Chen}
}
\fi

\usepackage{amssymb}
\usepackage{epstopdf}
\usepackage{algorithm}
\usepackage{booktabs}
\usepackage{graphicx}
\usepackage{caption}
\usepackage{tikz}
\usepackage{subcaption}
\usepackage{float}
\usepackage{placeins}

\newcommand\vx{\mathbf{x}}
\newcommand\vy{\mathbf{y}}

\newcommand\ma{\mathbf{A}}
\newcommand\mb{\mathbf{B}}

\newcommand\me{\mathbf{E}}

\newcommand\mO{\mathbf{\Omega}}

\newcommand\mq{\mathbf{Q}}

\newcommand\ms{\mathbf{S}}

\newcommand\mU{\mathbf{U}}
\newcommand\mv{\mathbf{V}}

\newcommand\my{\mathbf{Y}}
\newcommand\mz{\mathbf{Z}}

\usepackage{xcolor}

\newcommand{\algcomment}[1]{\hfill$\triangleright$~#1}

\graphicspath{ {./FIGURES/} }

\begin{document}

\maketitle
\begin{abstract}
We study fixed-tolerance low-rank approximation in the matrix-free setting, where a matrix or linear operator $\mathbf{A}$ is accessible only through matrix-vector products and its rank must be determined adaptively to meet a prescribed error tolerance. We introduce a family of adaptive, matrix-free randomized QB algorithms. A randomized error indicator estimates the residual norm---in either the Frobenius or the spectral norm---directly from a random sketch, remaining accurate down to machine precision. A matrix-free rank-pruning step decouples the computational block size from the final rank, so that large, BLAS-3-friendly blocks can be used without over-estimating the rank, and an adjoint-free variant returns the orthonormal basis using only the forward operator. Across test matrices with diverse singular-value decays, the proposed methods attain ranks close to the truncated-SVD optimum while meeting the prescribed tolerance with high probability.

\end{abstract}

\begin{keywords}
  low-rank approximation,
  matrix-free algorithm,
  adaptive rank determination,
  adjoint-free range finder,
  fixed-precision problem,
  randomized norm estimation
\end{keywords}

\begin{AMS}
  65F55, 68W20, 15A23, 65F30, 15A18
\end{AMS}

\section{Introduction}

We consider the \textbf{fixed-tolerance low-rank approximation} problem. Given a matrix $\mathbf{A} \in \mathbb{R}^{m \times n}$ and a target tolerance $\varepsilon > 0$, we seek to compute matrices $\mathbf{Q} \in \mathbb{R}^{m \times k}$ and $\mathbf{B} \in \mathbb{R}^{k \times n}$ such that
\begin{equation} \label{eq:auv}
\| \mathbf{A} - \mathbf{Q} \mathbf{B} \| \le \tau,
\end{equation}
where the rank $k \ll \min(m,n)$ is \emph{a priori} unknown and must be determined dynamically. In \cref{eq:auv}, the tolerance $\tau$ is a user-prescribed parameter, specified either as an absolute threshold $\tau = \varepsilon$ or a relative threshold $\tau = \varepsilon \, \|\ma\|$. We focus on the Frobenius norm $\|\cdot\|_\text{F}$ and the spectral norm $\|\cdot\|_2$. As is typical of randomized methods, the algorithms we develop satisfy \eqref{eq:auv} with high probability and, in the spectral norm, up to a modest constant (\cref{thm:pruning_spec}).

We operate in the \textbf{matrix-free setting}: $\mathbf{A}$ is accessible only through matrix-vector products (matvecs)---the forward and adjoint actions $\mathbf{x} \mapsto \mathbf{A}\mathbf{x}$ and $\mathbf{y} \mapsto \mathbf{A}^\top\mathbf{y}$---while its individual entries are not. This setting is ubiquitous in computational science. In Bayesian inverse problems and uncertainty quantification, for instance, the Hessian of the log-posterior is defined implicitly, yet a low-rank approximation of it accelerates downstream optimization and sampling~\cite{bui2013computational, flath2011fast}. There, a single matvec may require a full forward and adjoint PDE solve, so the dominant cost is the number of operator applications, not the internal dense linear algebra. \textbf{Our goal is therefore to solve \eqref{eq:auv} while minimizing the number of matvecs, and to do so accurately for $\varepsilon$ down to machine precision.}

We focus on a specific structural form of \eqref{eq:auv}, the \textbf{QB approximation}~\cite{MVO,GULI}, where $\mathbf{Q}$ has orthonormal columns; the choice $\mathbf{B} = \mathbf{Q}^\top\mathbf{A}$ then minimizes the residual in the Frobenius norm. QB is a building block for many matrix factorizations, decoupling range identification (the range finder) from entry approximation (the projection): examples include the truncated SVD~\cite{GoVa13}, the interpolative decomposition~\cite{cheng2005compression}, the higher-order SVD for tensor compression~\cite{de2000multilinear}, and hierarchical matrix approximations~\cite{hackbusch2015hierarchical, martinsson2016compressing}. Each relies on the fixed-tolerance QB primitive that the matrix-free methods of \cref{sec: 3} and \cref{sec: 4} provide.

\subsection{Existing Work and Limitations}

\paragraph{Classical deterministic methods}
Direct factorization methods such as the singular value decomposition (SVD) and rank-revealing QR (RRQR)~\cite{golub2013matrix, gu1996efficient} offer excellent stability and deterministic error bounds, but are designed for explicitly stored matrices: they require entry-wise access and scale as $\mathcal{O}(\min(m,n)mn)$, so they are inapplicable in the matrix-free setting.

Iterative Krylov subspace methods such as Lanczos and Arnoldi~\cite{saad2011numerical} use only matvecs to extract the leading singular triplets. Although effective for rapidly decaying spectra, they are numerically unstable in finite precision---the Lanczos vectors lose orthogonality, requiring expensive re-orthogonalization~\cite{paige1971computation}---and converge slowly for flat or clustered spectra, requiring many sequential matvecs.

Cross and skeleton methods such as the adaptive cross approximation (ACA)~\cite{bebendorf2000approximation, bebendorf2008hierarchical} build skeleton decompositions from a sublinear number of rows and columns. Although scalable for smooth kernels (e.g., boundary element methods), ACA must evaluate individual entries $A_{ij}$ at arbitrary positions; for implicit operators such as Hessians, a single entry costs a full matvec, negating this advantage. Its greedy pivoting is also prone to instability or premature convergence on matrices with zero blocks, noise, or localized features~\cite{heldring2016accuracy}.

\paragraph{Modern randomized methods}
Randomized numerical linear algebra now provides a mature toolkit for low-rank approximation~\cite{martinsson2020randomized, pearce2025randomized}. Randomized rank-revealing factorizations, such as randomized column-pivoted QR~\cite{martinsson2017householder, duersch2017randomized}, use random embeddings to select pivot columns efficiently, bypassing the communication overhead of classical pivoting. However, they still update the trailing matrix explicitly, so they cannot operate in a matrix-free setting.

Fixed-rank matrix-free methods such as the randomized SVD (RSVD) \cite{halko2011finding}, the Nyström method for positive semi-definite matrices \cite{gittens2013revisiting}, and the generalized Nyström scheme \cite{nakatsukasa2020fast} are matrix-free and parallelizable, but \textbf{fixed-rank}: the user must supply the target rank \emph{a priori} (which can itself be estimated by randomized numerical-rank estimation~\cite{meier2024fast}). Pass-efficient variants further reduce the number of views (passes) over $\ma$~\cite{bjarkason2018pass}, but they too require the rank in advance. In multiscale applications such as $\mathcal{H}$-matrix construction~\cite{chandrasekaran2010numerical,xia2013randomized,hackbusch2015hierarchical,ghysels2016robust}, where the numerical rank fluctuates across thousands of off-diagonal blocks, a fixed rank causes either memory over-allocation or loss of fidelity.

To address adaptivity, Halko, Martinsson, and Tropp~\cite[Algorithm 4.2]{halko2011finding} introduced an adaptive range finder, hereafter \texttt{randQB\_HMT}, the state-of-the-art (SOTA) adaptive method in the spectral norm; it is based on a probabilistic error bound that is pessimistic in practice and over-estimates the rank.
Yu, Gu, and Li~\cite{GULI} introduced \texttt{randQB\_EI}, the SOTA adaptive method in the Frobenius norm, which builds the range block by block and tracks the residual error dynamically. However, it estimates the residual Frobenius norm by Gramian-type tracking ($\mathbf{A}\mathbf{A}^\top$), squaring the effective condition number and capping the attainable precision near $\sqrt{\varepsilon_{\text{mach}}}$ ($\approx 10^{-8}$ in IEEE double precision). Because its indicator subtracts $\|\mathbf{B}\|_{\mathrm{F}}^2$ from $\|\mathbf{A}\|_{\mathrm{F}}^2$, it also requires the Frobenius norm $\|\mathbf{A}\|_{\mathrm{F}}$ to be computed explicitly from $\mathbf{A}$ and is thus not matrix-free.

A related adaptive Frobenius-norm method, by Gorman et al.~\cite{gorman2019robust}, attains comparable accuracy but certifies the error by post-processing the random sketches from all iterations, making it substantially more expensive than the rank pruning introduced here (\cref{sec:rr}). More recently, Liu and Yu~\cite{liu2025efficient} proposed adaptive, fixed-threshold randomized algorithms with error analysis in both the Frobenius and spectral norms; like the methods above, however, they operate on the explicit entries of $\ma$ and are not matrix-free. Adaptive randomized schemes have likewise been developed for other factorizations and settings, including rank-adaptive CUR~\cite{pritchard2025fast}, adaptive low-rank approximation for image compression~\cite{xu2025novel}, and dynamical low-rank approximation~\cite{carrel2024randomized}.

In short, no existing method is simultaneously matrix-free, adaptive in rank, and accurate down to machine precision in both the Frobenius and spectral norms; closing this gap is the goal of the present work.

\subsection{Contributions}

The primary contribution is a family of adaptive, matrix-free QB algorithms that solve the fixed-tolerance low-rank approximation problem down to machine precision ($\varepsilon_{\text{mach}}$) in both the Frobenius and spectral norms. They build on the blocked randomized QB iteration~\cite{MVO,GULI}, which constructs $\mathbf{Q}$ and $\mathbf{B}$ incrementally in blocks of $b$ columns per iteration. Our main developments are:
\begin{itemize}

\item
\textbf{Randomized residual indicators:}
Rather than a single, generic indicator, we develop a family of randomized error indicators tailored to each method---the sketched residual norm in the Frobenius case and a look-ahead block norm in the spectral case, each with an adjoint-free counterpart. Every indicator is read off the block sketch already computed at each iteration, at negligible cost, and avoids the failure modes of prior adaptive schemes: the ``precision wall'' near $\sqrt{\varepsilon_{\text{mach}}}$ of energy-subtraction (Gramian) indicators~\cite{GULI}, and the rank over-estimation caused by the conservative probabilistic bound of~\cite{halko2011finding}, remaining accurate and tight down to machine precision.

\item
\textbf{Rank Pruning and the Block-Size Dilemma:}
Our most significant algorithmic innovation is a post-iteration rank pruning step. Large block sizes $b$ are needed for matrix-matrix (BLAS-3) efficiency, but they over-estimate the rank when the tolerance is met mid-block. Our pruning decouples the block size from the final rank, preserving high performance while retaining a near-minimal rank.

\item
\textbf{Adjoint-Free Adaptivity:}
We propose adjoint-free variants that determine the rank without the adjoint $\mathbf{A}^\top$. When the adjoint is expensive or unavailable and only the basis $\mathbf{Q}$ is needed, these variants roughly halve the passes over the data ($\mathbf{B}$ is not formed).

\end{itemize}

We emphasize that the basic matrix-free and adjoint-free blocked QB frameworks have appeared in prior work; our contributions are the rank-pruning step, the matrix-free estimate of $\|\ma\|$ for relative tolerances, and a thorough numerical study of the resulting methods.

\Cref{tab:methods} summarizes the four resulting methods, organized by norm and by whether the adjoint operator is required.

\begin{table}[htbp]
\centering
\caption{The four proposed adaptive, matrix-free QB methods. The matrix-free (\texttt{MF}) variants use both the forward and adjoint actions ($\ma$ and $\ma^\top$); the adjoint-free (\texttt{AF}) variants use only the forward action $\ma$.}
\label{tab:methods}
\begin{tabular}{lcc}
\toprule
 & Matrix-free (\texttt{MF}) & Adjoint-free (\texttt{AF}) \\
\midrule
Frobenius norm & \texttt{randQB\_MF\_Fro}~(Alg.~\ref{alg:adapt_fro}) & \texttt{randQB\_AF\_Fro}~(Alg.~\ref{alg:adjoint_free_frob}) \\
Spectral norm & \texttt{randQB\_MF\_Spec}~(Alg.~\ref{alg:adapt_spec}) & \texttt{randQB\_AF\_Spec}~(Alg.~\ref{alg:adjoint_free_spec}) \\
\bottomrule
\end{tabular}
\end{table}

\section{Preliminaries and Problem Formulation}

\subsection{Notation}

Throughout this work, matrices are denoted by bold uppercase letters (e.g., $\mathbf{A}, \mathbf{B}$), vectors by bold lowercase letters (e.g., $\mathbf{x}, \mathbf{y}$), and scalars by standard lowercase or Greek letters (e.g., $k, \varepsilon$). The singular value decomposition (SVD) of $\mathbf{A}$ is denoted as $\mathbf{A} = \mathbf{U} \mathbf{\Sigma} \mathbf{V}^\top$, with singular values $\sigma_1 \ge \sigma_2 \ge \dots \ge 0$.
We utilize two standard matrix norms to measure the approximation error:
\begin{itemize}
\item
The Frobenius norm, defined as $\|\mathbf{A}\|_{\text{F}} = \sqrt{\sum_{i,j} |A_{ij}|^2} = \sqrt{\sum_j \sigma_j^2}$.

\item
The spectral norm, defined as $\|\mathbf{A}\|_2 = \sigma_1$.
\end{itemize}

Our pseudocode uses two MATLAB-style primitives: $\texttt{randn}(n, b)$ returns an $n \times b$ matrix with independent standard Gaussian $\mathcal{N}(0,1)$ entries, and $\texttt{orth}(\mathbf{Y})$ returns a matrix whose columns form an orthonormal basis for the range of $\mathbf{Y} \in \mathbb{R}^{n \times b}$ with $b < n$, computed in practice via a thin (unpivoted) QR factorization.

\begin{remark}[Complex matrices]
For clarity, all results are stated for real matrices. Every algorithm and result carries over verbatim to complex matrices by replacing each matrix transpose with the conjugate transpose. \end{remark}

\subsection{The Standard Randomized QB Decomposition}
\label{sec:qb}
The standard randomized algorithm for computing a low-rank QB approximation proceeds in two primary phases: range finding and projection. The method comes with a priori error guarantees: the approximation error is within a polynomial factor of the optimal value $\sigma_{k+1}(\ma)$ with high probability~\cite{halko2011finding}; see~\cite{scotto2024unified} for a unified error analysis in both the Frobenius and spectral norms. It does, however, require the target rank $k$ to be fixed in advance. The randomized QB method goes as follows:

\begin{enumerate}
\item
\textbf{Range Finding:}
 Draw a Gaussian random test matrix $\mathbf{\Omega} \in \mathbb{R}^{n \times (k+p)}$ (Here, $p$ is an oversampling parameter chosen typically as 5 or 10). Compute the sketch $\mathbf{Y} = \mathbf{A}\mathbf{\Omega}$ and form an orthonormal basis for its range: $\mathbf{Q} = \texttt{orth}(\mathbf{Y})$.

\item
\textbf{Projection:}
Project the original matrix onto this basis to form the core matrix: $\mathbf{B} = \mathbf{Q}^\top\mathbf{A}$.

\end{enumerate}

For adaptive, fixed-tolerance settings, the blocked QB method~\cite{MVO,GULI} builds $\mathbf{Q}$ and $\mathbf{B}$ incrementally in blocks of size $b$. At the $i$-th iteration, let $\mathbf{Q}^{(i-1)}$ and $\mathbf{B}^{(i-1)}$ be the accumulated bases. The algorithm sketches the residual operator $\mathbf{E}_{i-1} = \mathbf{A} - \mathbf{Q}^{(i-1)}\mathbf{B}^{(i-1)}$ as $\mathbf{Y}_i = \mathbf{E}_{i-1} \mathbf{\Omega}_i$ using a block of random vectors $\mathbf{\Omega}_i \in \mathbb{R}^{n \times b}$.
Once $\mathbf{Q}_i = \texttt{orth}(\mathbf{Y}_i)$ is computed, the block $\mathbf{B}_i$ is formed via $\mathbf{B}_i = \mathbf{Q}_i^\top\mathbf{A}$. This process is summarized in \cref{alg:blocked_QB}

\begin{algorithm}

\caption{Adaptive blocked QB iteration (variant of \cite{MVO,GULI})}
\label{alg:blocked_QB}

\begin{algorithmic}[1]
\REQUIRE Matrix $\mathbf{A}\in \mathbb{R}^{m \times n }$, (absolute) tolerance $\varepsilon$, and block size $b.$
\ENSURE Matrix $\mq$ with orthonormal columns and matrix $\mb$ such that $\|\ma-\mq\mb\| \leq \varepsilon$ with high probability.

\item[]

\STATE $\me_0 = \ma$
\FOR {$i=1,2,\ldots$}

    \STATE $\mathbf{\Omega}_i \gets \texttt{randn}(n, b)$ \algcomment{Gaussian random block}
    \STATE $\my_i \gets \me_{i-1}\mO_i$
    \algcomment{Sample range space of the residual matrix }
    \STATE $\mq_{i} \gets \texttt{orth}(\my_i)$
    \algcomment{Thin QR decomposition}
    \STATE $\mb_{i} \gets \mq_{i}^\top\ma$

    \STATE $\me_{i} \gets \me_{i-1} - \mq_{i}\mb_{i}$ \algcomment{Update residual matrix $\me_{i} = \ma - \sum_{k = 1}^{i}\mq_k\mb_k$}
    \IF{$\|\me_i\| \leq \varepsilon$}
        \STATE \textbf{break} \algcomment{Target tolerance reached}
    \ENDIF
\ENDFOR

\STATE $\mq \gets [\mq_1\; \mq_2 \; \cdots\; \mq_{i} ]$

\STATE $\mb \gets [\mb_1^\top\; \mb_2^\top \; \cdots\; \mb_{i}^\top ]^\top$

\end{algorithmic}
\end{algorithm}

Notice that \cref{alg:blocked_QB} measures the error at each iteration via the (Frobenius or spectral) norm of the residual matrix $\me_i$. Forming $\me_i$ explicitly violates the matrix-free constraint: it costs as many matvecs as $\ma$ has rows or columns.

\subsection{Estimating the Frobenius Norm}
We recall a classical result on randomized Frobenius-norm estimation, due to Hutchinson~\cite{Hutchinson}, which underlies our residual tracking without forming matrix entries.

\begin{theorem}[Lemma 1 in \cite{Hutchinson}] \label{thm:expect thm - Y_i}
    Let $\ma \in \mathbb{R}^{m \times n}$ and let \(\mO_i \in \mathbb{R}^{n \times b}\) be a random Gaussian matrix whose entries are independent and identically distributed normal random variables with mean zero and variance $1$. Then
    \begin{align*}
        \mathbb{E}\left[\frac{1}{b} \|\ma\mO_i\|_{\mathrm{F}}^2\right] = \|\ma\|_{\mathrm{F}}^2.
    \end{align*}
\end{theorem}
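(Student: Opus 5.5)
We will prove the identity by expanding the squared Frobenius norm column by column and using only the first two moments of the Gaussian entries; no concentration or tail argument is needed. Write $\mO_i = [\boldsymbol{\omega}_1, \dots, \boldsymbol{\omega}_b]$, where the columns $\boldsymbol{\omega}_j \in \mathbb{R}^n$ are independent standard Gaussian vectors. Then
\begin{equation*}
\|\ma\mO_i\|_{\mathrm{F}}^2 = \sum_{j=1}^{b} \|\ma\boldsymbol{\omega}_j\|_2^2 .
\end{equation*}
By linearity of expectation, it suffices to show that $\mathbb{E}\|\ma\boldsymbol{\omega}\|_2^2 = \|\ma\|_{\mathrm{F}}^2$ for a single standard Gaussian vector $\boldsymbol{\omega}$. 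The factor $1/b$ then cancels the $b$ identical terms.

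For the single-vector identity, we will write $\|\ma\boldsymbol{\omega}\|_2^2 = \boldsymbol{\omega}^\top \ma^\top\ma\,\boldsymbol{\omega}$ and expand it as the double sum
\begin{equation*}
\sum_{k,l} (\ma^\top\ma)_{kl}\,\omega_k\omega_l .
\end{equation*}
Since the entries are i.i.d.\ with mean $0$ and variance $1$, we have $\mathbb{E}[\omega_k\omega_l] = \delta_{kl}$. Hence
\begin{equation*}
\mathbb{E}\|\ma\boldsymbol{\omega}\|_2^2 = \operatorname{tr}(\ma^\top\ma) = \sum_{i,k} A_{ik}^2 = \|\ma\|_{\mathrm{F}}^2 .
\end{equation*}
Equivalently, we could write $\mathbb{E}[\boldsymbol{\omega}^\top\mathbf{M}\boldsymbol{\omega}] = \operatorname{tr}(\mathbf{M}\,\mathbb{E}[\boldsymbol{\omega}\boldsymbol{\omega}^\top]) = \operatorname{tr}(\mathbf{M})$ with $\mathbf{M} = \ma^\top\ma$, using $\mathbb{E}[\boldsymbol{\omega}\boldsymbol{\omega}^\top] = \mathbf{I}$.

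No step is a real obstacle here; the only point needing care is the interchange of expectation and the finite sum, which is immediate because every term has a finite second moment. We also note that the argument uses only $\mathbb{E}[\mO_i\mO_i^\top] = b\,\mathbf{I}$, so Gaussianity is not essential for the expectation identity. Gaussianity would matter only for variance or tail bounds, which are not part of this statement.
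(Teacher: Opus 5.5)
Your proof is correct and takes essentially the standard route. The paper does not prove this statement itself; it cites Hutchinson's Lemma~1, which rests on the same quadratic-form identity $\mathbb{E}[\mathbf{x}^\top\mathbf{M}\mathbf{x}] = \mathrm{tr}(\mathbf{M})$ for $\mathbf{x}$ with i.i.d.\ zero-mean, unit-variance entries, applied with $\mathbf{M} = \ma^\top\ma$ to each of the $b$ columns and then averaged, and your remark that only $\mathbb{E}[\mO_i\mO_i^\top] = b\,\mathbf{I}$ is needed is also right.
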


\Cref{thm:expect thm - Y_i} establishes unbiasedness in expectation; concentration bounds~\cite{Frobenius} further guarantee that a single realization stays close to the true norm with high probability. For an order-of-magnitude estimate of the residual error, a small block size ($b = 5$ or $10$) suffices, so convergence can be monitored with cheap, low-dimensional sketches.

\subsection{Estimating the Spectral Norm}
For stopping criteria in the spectral norm, we must track the singular values of the residual (and of $\ma$) from sketched information alone. We review a bound relating the singular values of the projection $\mb = \mq^\top\ma$ to those of $\ma$ in the QB framework of \cref{sec:qb}.

\begin{theorem}[Theorem 9 in \cite{Arvind_singular_value}] \label{thm:arvind}
Let $\mb = \mq^\top \ma$ be the projection produced by the randomized QB method with a Gaussian sketch $\mO \in \mathbb{R}^{n \times (k+p)}$, $k+p \le \min(m,n)$. Then, for $i = 1, \ldots, k$,
\[
\sigma_i(\ma) \ge \sigma_i(\mb) \ge \sigma_i(\ma)/\eta_i,
\]
where each factor $\eta_i \ge 1$ is bounded by a constant depending only on $k$, $p$, and a prescribed failure probability, with high probability~\cite[Theorem 5.8]{gu2015subspace}.
\end{theorem}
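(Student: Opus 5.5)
The statement has two parts: a deterministic upper bound $\sigma_i(\mb)\le\sigma_i(\ma)$, and a probabilistic lower bound $\sigma_i(\mb)\ge\sigma_i(\ma)/\eta_i$ with $\eta_i$ controlled with high probability. I will prove them separately.

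\textbf{Upper bound.} Since $\mq$ has orthonormal columns, $\|\mq^\top\|_2=1$. The multiplicative singular-value inequality $\sigma_i(\mathbf{X}\mathbf{Y})\le\|\mathbf{X}\|_2\,\sigma_i(\mathbf{Y})$ then gives $\sigma_i(\mq^\top\ma)\le\sigma_i(\ma)$. Alternatively, this follows from the Courant--Fischer min-max characterization.

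\textbf{Lower bound.} Let $\mathbf{P}=\mq\mq^\top$. Then $\sigma_i(\mb)=\sigma_i(\mathbf{P}\ma)$, and also $\sigma_i(\mb)=\sigma_i(\mathbf{P}\ma\mathbf{W})$ for any $\mathbf{W}$ with orthonormal columns, up to an inequality in the favourable direction. I would take $\mathbf{W}=\mv_k$, the leading $k$ right singular vectors, giving
\[
\sigma_i(\mb)\ge\sigma_i(\mathbf{P}\ma\mv_k).
\]
Partition $\ma$ via its SVD into a leading block $\Sigma_1$ and a tail block $\Sigma_2$. Write $\Omega_1=\mv_k^\top\mO\in\mathbb{R}^{k\times(k+p)}$ and $\Omega_2=\mv_\perp^\top\mO$. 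Because $\mathrm{range}(\mq)\supseteq\mathrm{range}(\ma\mO)$, we have
\[
\mathbf{P}\ma\mathbf{X}=\ma\mathbf{X}\quad\text{for every }\mathbf{X}\in\mathrm{range}(\mO).
\]
The key construction is $\mathbf{X}=\mO\Omega_1^\dagger$; here $\Omega_1$ has full row rank almost surely. With this choice,
\[
\ma\mathbf{X}=\mU_1\Sigma_1+\mU_2\Sigma_2\Omega_2\Omega_1^\dagger .
\]
The column space of $\mathbf{X}$ lies in $\mathrm{range}(\mO)$, so
\[
\sigma_i(\mathbf{P}\ma)\ge\sigma_i(\ma\mathbf{X})/\|\mathbf{X}\|_2 .
\]
Taking $\mU_1^\top$, which has norm one, gives $\sigma_i(\ma\mathbf{X})\ge\sigma_i(\Sigma_1)=\sigma_i(\ma)$. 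This yields
\[
\eta_i\le\|\mO\Omega_1^\dagger\|_2\le\sqrt{1+\|\Omega_2\Omega_1^\dagger\|_2^2}.
\]
To obtain the second bound I would rotate $\mathbf{X}$ into the orthonormal coordinates $[\mv_k,\mv_\perp]$.

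This crude bound is insufficient, because $\|\Omega_2\|_2\sim\sqrt n$ can make $\eta_i$ large. I would sharpen it in the way Gu's analysis does, by retaining the tail weight: consider the $(i)$-th singular value of the block matrix $[\Sigma_1;\ \Sigma_2\Omega_2\Omega_1^\dagger]$ relative to $\Sigma_1$. This gives
\[
\eta_i\le\sqrt{1+\|\Sigma_2\Omega_2\Omega_1^\dagger\|_2^2/\sigma_i^2}.
\]
The remaining step is to bound $\|\Omega_2\|_2$ and $\|\Omega_1^\dagger\|_2$ with high probability using standard Gaussian tail estimates. Rotational invariance makes $\Omega_1$ and $\Omega_2$ independent Gaussian matrices. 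The smallest singular value of the $k\times(k+p)$ matrix $\Omega_1$ satisfies the bound from \cite{halko2011finding}: $\|\Omega_1^\dagger\|_2\lesssim e\sqrt{k+p}/(p+1)$ with failure probability of order $t^{-p}$.

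\textbf{Main obstacle.} The hard part is the size of $\|\Omega_2\|_2$, which scales like $\sqrt{n-k}+\sqrt{k+p}$. The ratio $\sigma_{k+1}/\sigma_i$ absorbs this only when the spectrum decays. Otherwise $\eta_i$ depends on $n$ rather than only on $k$, $p$ and the failure probability, so the claim that it depends ``only on $k$, $p$ and the failure probability'' does not come out directly. At that point I would defer to the precise statement in \cite[Theorem 5.8]{gu2015subspace} and not try to rederive the constants.
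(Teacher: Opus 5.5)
The paper gives no proof of this statement; it imports it from \cite[Theorem~5.8]{gu2015subspace}. Your outline follows that reference's route: a deterministic bound through $\mathbf{X}=\mO\Omega_1^\dagger$, then Gaussian tail bounds on $\|\Omega_1^\dagger\|_2$ and $\|\Omega_2\|_2$. Your upper bound and the crude bound $\eta_i\le\sqrt{1+\|\Omega_2\Omega_1^\dagger\|_2^2}$ are correct.

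The sharpened bound, however, is asserted rather than derived. The mechanism you actually use, dividing $\sigma_i(\ma\mathbf{X})$ by $\|\mathbf{X}\|_2$, cannot produce it, because $\|\mathbf{X}\|_2$ never sees $\Sigma_2$. The missing idea is projector domination. Set $\mathbf{F}=\Omega_2\Omega_1^\dagger$ and $\mathbf{H}=\ma\mathbf{X}=\mU_1\Sigma_1+\mU_2\Sigma_2\mathbf{F}$, and let $\mathbf{P}_{\mathbf{H}}$ be the orthogonal projector onto $\mathrm{range}(\mathbf{H})\subseteq\mathrm{range}(\mq)$. Then $\mathbf{P}\succeq\mathbf{P}_{\mathbf{H}}$, so $\ma^\top\mathbf{P}\ma\succeq\ma^\top\mathbf{P}_{\mathbf{H}}\ma$ and $\sigma_i(\mb)\ge\sigma_i(\mathbf{P}_{\mathbf{H}}\ma\mv_k)$. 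Since $\ma\mv_k=\mU_1\Sigma_1$ and $\mathbf{H}^\top\mU_1\Sigma_1=\Sigma_1^2$,
\[
(\mathbf{P}_{\mathbf{H}}\mU_1\Sigma_1)^\top(\mathbf{P}_{\mathbf{H}}\mU_1\Sigma_1)=\Sigma_1^2(\mathbf{H}^\top\mathbf{H})^{-1}\Sigma_1^2=\mathbf{T}^{-1},
\qquad
\mathbf{T}=\Sigma_1^{-2}+(\Sigma_2\mathbf{F}\Sigma_1^{-2})^\top(\Sigma_2\mathbf{F}\Sigma_1^{-2}).
\]
The $i$-th largest eigenvalue of $\mathbf{T}^{-1}$ is the reciprocal of the $(k-i+1)$-th largest eigenvalue of $\mathbf{T}$. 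Courant--Fischer, with the trial space spanned by the first $i$ coordinate vectors, bounds the latter by $\sigma_i^{-2}+\|\Sigma_2\mathbf{F}\|_2^2\,\sigma_i^{-4}$. This yields exactly your $\eta_i\le\sqrt{1+\|\Sigma_2\mathbf{F}\|_2^2/\sigma_i^2}$. A plain norm bound on $\Sigma_1^{-2}$ would leave $\sigma_k$ where you want $\sigma_i$.

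Your ``main obstacle'' cannot be resolved by deferring to the reference. The claim that $\eta_i$ depends only on $k$, $p$ and the failure probability is false, and Gu's theorem does not assert it: its probabilistic constant carries the same $\sqrt{n-k}$-type factor from $\|\Omega_2\|_2$ that your derivation produces. For a counterexample, take $k=1$ and $\ma=\mathrm{diag}(1,\delta\mathbf{I}_{n-1})$ with $\delta=n^{-1/4}$, so $\Omega_1$ is the first row of $\mO$ and $\Omega_2$ the remaining rows. Then
\[
\|\mathbf{P}\mathbf{e}_1\|_2^2=\max_{\mathbf{c}\neq\mathbf{0}}\frac{(\Omega_1\mathbf{c})^2}{(\Omega_1\mathbf{c})^2+\delta^2\|\Omega_2\mathbf{c}\|_2^2}\le\frac{\|\Omega_1\|_2^2}{\|\Omega_1\|_2^2+\delta^2\sigma_{\min}(\Omega_2)^2}.
\]
With high probability $\sigma_{\min}(\Omega_2)^2\gtrsim n$ and $\|\Omega_1\|_2^2\lesssim p+1$. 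Hence $\sigma_1(\mb)\le\|\mathbf{P}\mathbf{e}_1\|_2+\delta\lesssim(1+\sqrt{p+1})\,n^{-1/4}$, while $\sigma_1(\ma)=1$. So $\eta_1\gtrsim n^{1/4}/(1+\sqrt{p+1})$, which is unbounded as $n\to\infty$ with $k$, $p$ and the failure probability fixed. Your own bound gives $\eta_1\lesssim n^{1/4}/\sqrt{p+1}$ here, so the $n$-dependence is genuine, not an artifact of the analysis.

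The right conclusion of your argument is therefore a corrected statement. With probability at least $1-\Delta$, $\eta_i\le\sqrt{1+\mathcal{C}^2(\sigma_{k+1}/\sigma_i)^2}$, where $\mathcal{C}$ is the product of your tail bounds on $\|\Omega_2\|_2$ and $\|\Omega_1^\dagger\|_2$ and grows like $\sqrt{n-k}$. This bound becomes $n$-independent only when spectral decay keeps $\sqrt{n}\,\sigma_{k+1}/\sigma_i$ bounded.
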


By \cref{thm:arvind}, the spectral norm of the projection $\mb$ is a reliable proxy for the dominant singular values of $\ma$: with $i=1$, $\|\ma\|_2 = \sigma_1(\ma) \le \eta_1 \sigma_1(\mb) = \eta_1 \|\mb\|_2$. Applying this with the residual $\me_i = \ma - \mq^{(i)}\mb^{(i)}$ in place of $\ma$ lets us monitor the spectral-norm error without ever forming the residual.

A second, complementary tool is the notion of a \emph{subspace embedding}~\cite{martinsson2020randomized}: a sketch that preserves norms on a subspace also preserves its singular values. The simplest such embedding is a (scaled) random Gaussian matrix~\cite{martinsson2020randomized}, which is the sketch used throughout this work.

\begin{lemma}[Subspace embedding and spectral approximation] \label{lem:subspace_embedding}
Let $\ma \in \mathbb{R}^{m \times n}$ and let $\ms \in \mathbb{R}^{s \times m}$ be an $\varepsilon$-subspace embedding for the range of $\ma$: for some $0 < \varepsilon < 1$,
\begin{align*}
(1-\varepsilon)\,\|\ma\vx\|_2 \le \|\ms\ma\vx\|_2 \le (1+\varepsilon)\,\|\ma\vx\|_2 \qquad \text{for all } \vx \in \mathbb{R}^n.
\end{align*}
Then $\tilde{\ma} := \ms\ma$ is an $\varepsilon$-spectral approximation of $\ma$: it preserves $\ma^\top\ma$ to relative error $\varepsilon$,
\begin{align*}
(1-\varepsilon)^2\,\ma^\top\ma \preceq \tilde{\ma}^\top\tilde{\ma} \preceq (1+\varepsilon)^2\,\ma^\top\ma,
\end{align*}
and in particular $(1-\varepsilon)\,\sigma_i(\ma) \le \sigma_i(\tilde{\ma}) \le (1+\varepsilon)\,\sigma_i(\ma)$ for every singular value.
\end{lemma}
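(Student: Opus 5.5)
The plan is to square the embedding inequality, read it as a Loewner-order statement, and then transfer that ordering to singular values via Courant--Fischer.

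\textbf{Step 1 (Loewner inequality).} Fix $\vx \in \mathbb{R}^n$. All quantities in the hypothesis are nonnegative and $1-\varepsilon>0$, so squaring preserves the inequalities:
\begin{align*}
(1-\varepsilon)^2\,\|\ma\vx\|_2^2 \le \|\ms\ma\vx\|_2^2 \le (1+\varepsilon)^2\,\|\ma\vx\|_2^2 .
\end{align*}
Rewrite each term as a quadratic form: $\|\ma\vx\|_2^2 = \vx^\top\ma^\top\ma\vx$ and $\|\ms\ma\vx\|_2^2 = \vx^\top\tilde{\ma}^\top\tilde{\ma}\vx$. This holds for every $\vx$. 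By the definition of $\preceq$ on symmetric matrices, we obtain
\begin{align*}
(1-\varepsilon)^2\,\ma^\top\ma \preceq \tilde{\ma}^\top\tilde{\ma} \preceq (1+\varepsilon)^2\,\ma^\top\ma .
\end{align*}

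\textbf{Step 2 (singular values).} Both $\ma$ and $\tilde{\ma}$ have $n$ columns, so $\ma^\top\ma$ and $\tilde{\ma}^\top\tilde{\ma}$ are $n\times n$ positive semidefinite. Their eigenvalues are $\sigma_i(\ma)^2$ and $\sigma_i(\tilde{\ma})^2$, padded with zeros up to length $n$.

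By the Courant--Fischer min-max characterization,
\begin{align*}
\lambda_i(\mathbf{M}) = \max_{\dim \mathcal{V}=i}\ \min_{\vx\in\mathcal{V},\,\|\vx\|_2=1} \vx^\top\mathbf{M}\vx .
\end{align*}
Hence $\mathbf{M}\preceq\mathbf{N}$ implies $\lambda_i(\mathbf{M})\le\lambda_i(\mathbf{N})$ for every $i$. Scaling by a positive constant scales every eigenvalue by that same constant. Applying this to both inequalities of Step 1 gives
\begin{align*}
(1-\varepsilon)^2\sigma_i(\ma)^2 \le \sigma_i(\tilde{\ma})^2 \le (1+\varepsilon)^2\sigma_i(\ma)^2 .
\end{align*}
Taking nonnegative square roots yields the stated singular-value bounds.

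\textbf{Indexing.} The only point needing care is that $s$ may differ from $m$, so $\ma$ and $\tilde{\ma}$ can have different numbers of singular values. Working with the $n\times n$ Gram matrices avoids this. For indices beyond $\min(m,n)$ or $\min(s,n)$, the corresponding eigenvalue is zero, and the bounds still hold trivially. I expect no genuine obstacle beyond stating this monotonicity step cleanly.
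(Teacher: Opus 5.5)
Your proof is correct and follows essentially the same route as the paper: square the embedding inequality to obtain the Loewner ordering, then apply Courant--Fischer monotonicity to the eigenvalues of the $n\times n$ Gram matrices. One small wording fix: the indices with $\sigma_i(\tilde{\ma})=0$ are not handled ``trivially''; there the lower bound $(1-\varepsilon)\sigma_i(\ma)\le 0$ holds because your zero-padded Gram-matrix inequality forces $\sigma_i(\ma)=0$.
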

\begin{proof}
The embedding condition squared reads $(1-\varepsilon)^2\,\vx^\top\ma^\top\ma\vx \le \vx^\top\tilde{\ma}^\top\tilde{\ma}\vx \le (1+\varepsilon)^2\,\vx^\top\ma^\top\ma\vx$ for all $\vx$, which is the stated Loewner ordering. The singular-value bounds follow from the Courant--Fischer min--max characterization.
\end{proof}

\section{Adaptive Matrix-free Algorithms in Frobenius norm} \label{sec: 3}

Suppose $\ma \in \mathbb{R}^{m \times n}$ is accessible only through its forward and adjoint actions $\vx \mapsto \ma \vx$ and $\vy \mapsto \ma^\top \vy$. Our objective is to compute $\mq \in \mathbb{R}^{m \times k}$ with orthonormal columns and $\mb = \mq^\top\ma \in \mathbb{R}^{k \times n}$ such that
\[
\|\ma - \mq\mb\|_{\mathrm{F}} \leq \tau,
\]
where the target rank $k$ is not known in advance and must be determined dynamically. The choice $\mb = \mq^\top\ma$ minimizes the Frobenius-norm error for a given basis $\mq$. The tolerance is specified either as an absolute threshold $\tau = \varepsilon$ or a relative threshold $\tau = \varepsilon \, \|\ma\|_{\mathrm{F}}$, for some $\varepsilon > 0$.

Standard approaches, such as the blocked QB algorithm~\cite{MVO}, explicitly form the residual matrix $\me_i$ at every iteration (see the residual update in \cref{alg:blocked_QB}). Consequently, they cannot be applied in the matrix-free setting. In this section, we demonstrate how to adapt the blocked QB framework into a purely matrix-free algorithm by resolving the following key algorithmic challenges:

\begin{enumerate}

\item \textbf{Residual Estimation (\cref{sec:err_ind}):} We introduce a randomized error indicator that efficiently estimates the residual norm $\|\me_i\|_{\mathrm{F}}$ \emph{without} forming $\me_i$. This bypasses the need for explicit matrix entries, enabling an adaptive stopping criterion relying exclusively on matrix-vector products.

\item \textbf{Block Size Selection (\cref{sec:rr}):} We analyze strategies for choosing the block size parameter to maximize computational efficiency while preventing severe overestimation of the numerical rank.

\item \textbf{Adjoint-Free Variant (\cref{sec:adjoint_free_frob}):} We introduce a forward-only modification that eliminates the need for adjoint queries ($\vy \mapsto \ma^\top \vy$) when only the basis $\mq$ is required.

\end{enumerate}

\subsection{Randomized Error Indicator and Matrix-Free Algorithm}
\label{sec:err_ind}

Recall the challenge in the blocked QB method is that the stopping criterion requires the error $\|\me_{i}\|_{\mathrm{F}}$, where the residual matrix
\begin{equation} \label{eq:ei}
\me_{i} = \ma - \sum_{k=1}^{i}\mq_k\mb_k \quad i=1,2,\ldots.
\end{equation}
To avoid accessing entries of $\me_{i}$, we use Hutchinson's trace (Frobenius norm) estimator. But recall that a sample matrix $\my_i = \me_{i-1} \, \mO_i$ is \emph{already computed} to build the basis at every iteration in the blocked QB algorithm. Here, $\me_{i-1}$ is the residual from the previous steps, and $\mO_i$ is a newly drawn random test matrix. Throughout, we normalize the random blocks so that $\mO_i \in \mathbb{R}^{n \times b}$ has i.i.d.~$\mathcal{N}(0, 1/b)$ entries---equivalently, $\mO_i = \texttt{randn}(n,b)/\sqrt{b}$---which absorbs the $1/\sqrt{b}$ factor that would otherwise appear in every norm estimate. Applying \cref{thm:expect thm - Y_i} to $\sqrt{b}\,\mO_i$ then dictates that
\begin{align*}
    \mathbb{E}
    \left[
        \|\my_i\|_{\mathrm{F}}^2
    \right]
        =\mathbb{E}
    \left[
        \|\me_{i-1}\mO_i\|_{\mathrm{F}}^2
    \right]
        =\|\me_{i-1}\|_{\mathrm{F}}^2.
\end{align*}
Therefore, we propose using $\|\my_i\|_{\mathrm{F}}^2$ as an unbiased randomized estimator of the current residual norm (squared) $\|\me_{i-1}\|_{\mathrm{F}}^2$ (see \cref{thm:expect thm - Y_i}).

\vspace{1em}
\fbox{\parbox{0.9\linewidth}{We use the norm of the sample matrix as a proxy for the true error:
    \begin{equation} \label{eq:est_me}
        \|\my_i\|_{\mathrm{F}}
        \approx
        \|\me_{i-1}\|_{\mathrm{F}}.
    \end{equation}
  }}
\vspace{1em}

Importantly, this estimator requires only the product $\me_{i-1} \mO_i$ and does not require explicit access to the entries of the residual. Furthermore, evaluating this estimator incurs negligible $\mathcal{O}(bm)$ additional computational cost.

We no longer need $\me_{i-1}$ for evaluating the error, but we still need to compute $\my_i = \me_{i-1} \, \mO_i$. How do we do this without storing $\me_{i-1}$? Assuming $i-1$ iterations are complete, we use a simple associativity trick:
\begin{align} \label{Y_i - full description}
\my_i
=\me_{i-1}\mO_i
& =\left(\ma-\sum_{k=1}^{i-1}\mq_k\mb_k\right)\mO_i \\
& = \underbrace{\ma\mO_i}_{\text{black-box Mat-Vec}}
-\sum_{k=1}^{i-1}\mq_k
\underbrace{(\mb_k\mO_i)}_{\text{small matrix mult}}. \notag
\end{align}
Forming $\my_i$ this way {completely eliminates} the need to ever form or store the residual matrix $\me_{i-1}$.

The combination of the randomized error indicator and the implicit residual expansion gives rise to a fully adaptive, matrix-free algorithm, summarized in \cref{alg:adapt_fro}. A defining feature of this new framework is that the sample matrix $\mathbf{Y}_i$ fulfills a dual algorithmic role. First, in its traditional capacity, it serves as the subspace generator to construct the next orthonormal block $\mathbf{Q}_i$. Second, by simply evaluating its Frobenius norm $\|\mathbf{Y}_i\|_{\mathrm{F}}$, it simultaneously provides an estimate of the current residual error.

\begin{algorithm}
\caption{\texttt{randQB\_MF\_Fro}: Adaptive, matrix-free low-rank approximation in Frobenius norm}
\label{alg:adapt_fro}
\begin{algorithmic}[1]
\REQUIRE Matrix $\mathbf{A} \in \mathbb{R}^{m \times n }$, (absolute) tolerance\footnotemark $\varepsilon > 0$, and block size $b.$
\ENSURE Matrix $\mq$ with orthonormal columns and matrix $\mb$ such that $\|\ma-\mq\mb\|_{\mathrm{F}} \leq \varepsilon$ with high probability.

\item[]

\FOR {$i=1,2,\ldots$}
\STATE $\mathbf{\Omega}_i \gets \texttt{randn}(n, b)/\sqrt{b}$ \algcomment{normalized Gaussian block}
    \STATE $\mathbf{Z}_i \gets \mathbf{A}\mathbf{\Omega}_i$ \algcomment{$\mathbf{A}$ is accessed via matvec}
    \STATE $\my_i \gets \mz_i - \sum_{k = 1}^{i-1}\mq_k\left(\mb_k\mO_i\right)$
    \algcomment{Implicit residual application via \eqref{Y_i - full description}}
    \IF{$\|\my_i\|_{\mathrm{F}} \leq \varepsilon$}
        \STATE \textbf{break} 
        \algcomment{Last computed blocks are $\mq_{i-1}$ and $\mb_{i-1}$}
    \ENDIF

    \STATE $\mathbf{Q}_i \gets \texttt{orth}(\mathbf{Y}_i)$
    \algcomment{Thin QR decomposition}
    \STATE $\mathbf{Q}_i \gets \texttt{orth}\left(\mathbf{Q}_i - \sum_{k = 1}^{i-1}\mathbf{Q}_k\left(\mathbf{Q}_k^\top\mathbf{Q}_i\right)\right)$
    \algcomment{Re-orthogonalization}
    \STATE $\mathbf{B}_i \gets \mathbf{Q}_i^\top\mathbf{A}$ \algcomment{$\mathbf{A}$ is accessed via adjoint matvec}
\ENDFOR

\STATE $\textbf{s} \gets \texttt{Prune\_Rank\_Fro}(\mb_{i-1}, \, \|\my_i\|_{\mathrm{F}},\, \varepsilon)$
\algcomment{See \cref{sec:rr}}
\STATE $\mq \gets [\mq_1 \; \ldots \; \mq_{i-2} \; \mq_{i-1}^\textbf{s}]$
\algcomment{$\mq_{i-1}^\textbf{s}$ contain the  columns subset \textbf{s} in $\mq_{i-1}$}
\STATE $\mb \gets [\mb_1^\top\; \cdots\; \mb_{i-2}^\top\; (\mb_{i-1}^\textbf{s})^\top ]^\top$
\algcomment{$\mb_{i-1}^\textbf{s}$ contain the  row subset \textbf{s} in $\mb_{i-1}$}
\end{algorithmic}
\end{algorithm}
\footnotetext{See \cref{rmk:relative tolerance} for the modification for relative tolerance.}

We now highlight two practical considerations for the robust and efficient implementation of \cref{alg:adapt_fro}.
\begin{enumerate}
\item \textbf{Loss of Orthogonality:}
In exact arithmetic, the output matrix $\mathbf{Q}$ naturally maintains orthonormal columns. In practice, finite-precision arithmetic inevitably leads to a gradual loss of orthogonality. To counteract this, a re-orthogonalization step against the previously computed basis blocks is employed after evaluating the new basis vectors.

\item \textbf{Memory Allocation:}
The iterative algorithm computes the output matrices $\mathbf{Q}$ and $\mathbf{B}$ one block at a time. Dynamically reallocating memory for these matrices with increasing sizes at every iteration incurs a severe performance penalty. An optimized implementation should instead pre-allocate memory up to a maximum expected rank. If the iterative rank exceeds this estimate, the array sizes can be doubled to amortize the cost of reallocation.

\end{enumerate}

\begin{remark}[Relative Tolerance] \label{rmk:relative tolerance}
    Thus far, we have considered an absolute tolerance, i.e., $\tau=\varepsilon$. For a relative tolerance $\tau=\varepsilon\,\|\mathbf{A}\|_{\mathrm{F}}$, we also require a matrix-free estimate of $\|\mathbf{A}\|_{\mathrm{F}}$. This makes the relative criterion more challenging than the absolute one: the algorithm must estimate $\|\mathbf{A}\|_{\mathrm{F}}$ in addition to the residual norm, without explicit access to the entries of $\mathbf{A}$. Let $\mathbf{Z}_i = \mathbf{A}\mathbf{\Omega}_i$ as computed in \cref{alg:adapt_fro}. Using the same probabilistic identity, we have
\[
\mathbb{E}
\left[ \|\mathbf{Z}_i\|_{\mathrm{F}}^2\right]
=
\|\mathbf{A}\|_{\mathrm{F}}^2.
\]
Therefore, $\|\mathbf{Z}_i\|_{\mathrm{F}}^2$ provides a randomized, matrix-free estimator of $\|\mathbf{A}\|_{\mathrm{F}}^2$. Since a new block $\mathbf{Z}_i$ is generated at every iteration, we can systematically reduce the variance of this estimate by aggregating the samples via a cumulative root-mean-square estimator:
\begin{equation} \label{eq:est_ma}
S_i
\equiv
\sqrt{\frac{1}{i} \sum_{k=1}^{i} \|\mathbf{Z}_k\|_{\mathrm{F}}^2}
\approx
\|\mathbf{A}\|_{\mathrm{F}}.
\end{equation}
Because the quantities $\mathbf{Z}_k$ are already computed during the basis generation phase, evaluating $S_i$ incurs only $\mathcal{O}(bm)$ additional computational cost at every iteration.
\end{remark}

\begin{remark}[Precision Limits] \label{rmk:randQB_ei}
    In the existing method \texttt{randQB\_EI}~\cite{GULI}, the residual norm is tracked by subtracting squared norms of the accumulated basis, i.e., $\|\me_i\|_{\mathrm{F}}^2 = \|\me_{i-1}\|_{\mathrm{F}}^2 - \|\mb_i\|_{\mathrm{F}}^2$. This approach is susceptible to catastrophic cancellation, limiting the achievable tolerance to $\mathcal{O}(\sqrt{\varepsilon_{\textbf{mach}}})$ (approximately $10^{-8}$ in IEEE double precision). Because our newly proposed error indicator evaluates the residual norm directly via implicit application, it bypasses this cancellation issue and can reliably achieve precision on the order of $\mathcal{O}(\varepsilon_{\textbf{mach}})$.
\end{remark}

\begin{remark}[Relation to prior work]
The randomized Frobenius-norm indicator and blocked iteration underlying \cref{alg:adapt_fro} are not new in themselves: a randomized (Hutchinson) Frobenius-norm estimate is used by Pearce et al.~\cite{pearce2025adaptive} to drive an adaptive interpolative decomposition, and the same matrix-free blocked QB framework underlies the hierarchically semiseparable construction of Gorman et al.~\cite{gorman2019robust}, which does not report numerical results for the basic adaptive algorithm in isolation. Our main algorithmic contribution is the rank-pruning step of \cref{sec:rr}, which decouples the block size from the final rank so that large, BLAS-3-friendly blocks can be used without over-estimating the rank; a secondary contribution is the matrix-free estimate of $\|\ma\|_{\mathrm{F}}$ for relative tolerances (\cref{rmk:relative tolerance}). We also provide a thorough numerical study (\cref{sec:numerical}) demonstrating the efficiency and machine-precision accuracy of the proposed methods.
\end{remark}

\subsection{Block Size Selection and Rank Pruning}\label{sec:rr}

Modern cache-based computing architectures are most efficient when computations can be organized as matrix-matrix products of sufficient size. For this reason, choosing a sufficiently large block size $b$ in \cref{alg:adapt_fro} can substantially improve practical performance. The drawback is that the output rank grows in multiples of $b$, meaning a large block size may overestimate the true numerical rank by as much as $b-1$, leading to unnecessary storage and downstream computational costs.

To mitigate this, we introduce a lightweight {rank pruning} procedure that prunes the final block produced by \cref{alg:adapt_fro}. The pruning operates exclusively on the last computed block (the last $b$ rows of $\mb$), and it has negligible computational overhead, just $\mathcal{O}(b n)$ operations. The objective is to keep the computed low-rank approximation as compact as possible while still rigorously satisfying the prescribed tolerance.

Suppose \cref{alg:adapt_fro} is applied to $\ma\in\mathbb{R}^{m\times n}$ and the look-ahead test passes at iteration $i \ge 2$, so the algorithm retains $\tilde{\mq} = [\mq_1\; \mq_2 \; \cdots\; \mq_{i-1} ]$ and $\tilde{\mb} = [\mb_1^\top\; \mb_2^\top \; \cdots\; \mb_{i-1}^\top ]^\top$. The state at termination reads:
    \begin{align*}
        \text{Previous step failed:}
        \quad
        &{\|\me_{i-2}\|_\mathrm{F} > \tau} \quad
        \text{where} \;\; \me_{i-2} = \ma - \sum_{k=1}^{i-2}\mq_k \mb_k \\
\text{Current step passed:}
        \quad
        &{\|\me_{i-1}\|_\mathrm{F} \le \tau}
        \quad \;\;
        \text{where} \;\; \me_{i-1} = \me_{i-2} - \mq_{i-1}\mb_{i-1}.
    \end{align*}
Here, $\mq_{i-1}\in\mathbb{R}^{m\times b}$ and $\mb_{i-1}\in\mathbb{R}^{b\times n}$ are the final blocks added to the basis. (Technically speaking, we don't have the exact errors $\|\me_{i-2}\|_\mathrm{F}$ and $\|\me_{i-1}\|_\mathrm{F}$ but their randomized estimators.)

\vspace{1em}
\fbox{\parbox{0.85\linewidth}{Since the final update block $\mq_{i-1}\mb_{i-1}$ is simply a sum of $b$ rank-$1$ outer products, our goal is to retain \textbf{as few} rank-$1$ outer products as possible to just cross the $\varepsilon$ threshold, stripping away the excess rank.
  }}
\vspace{1em}

To formalize this approach, we partition the index set $\{1,\ldots,b\}$ into two disjoint subsets:
\[
\{1,\ldots,b\} = \mathbf{s}\cup\mathbf{r}, \qquad \mathbf{s}\cap\mathbf{r}=\emptyset, \qquad |\mathbf{s}|+|\mathbf{r}|=b.
\]
Here, $\mathbf{s}$ denotes the indices to \emph{select} and $\mathbf{r}$ denotes the indices to \emph{remove}. Let $\mq_{i-1}^{\mathbf{s}}$ and $\mq_{i-1}^{\mathbf{r}}$ contain the corresponding columns in $\mq_{i-1}$, and let $\mb_{i-1}^{\mathbf{s}}$ and $\mb_{i-1}^{\mathbf{r}}$ contain the corresponding rows in $\mb_{i-1}$.
It follows that the final block update can be split as $
\mq_{i-1}\mb_{i-1} = \mq_{i-1}^{\mathbf{s}}\mb_{i-1}^{\mathbf{s}} + \mq_{i-1}^{\mathbf{r}}\mb_{i-1}^{\mathbf{r}}.$
Selecting the smallest necessary set $\mathbf{s}$ corresponds to solving
\begin{align}
\label{1st opt prob}
\min\ |\mathbf{s}| \quad\text{subject to}\quad \big\| \me_{i-2} - \mq_{i-1}^{\mathbf{s}}\mb_{i-1}^{\mathbf{s}} \big\|_{\mathrm{F}} \le \tau.
\end{align}
Once $\mathbf{s}$ is determined, the compact form of our low-rank approximation becomes ${\mq} = [\mq_1\; \cdots\; \mq_{i-2} \; \mq_{i-1}^{\mathbf{s}} ]$ and ${\mb} = [\mb_1^\top\; \cdots\; \mb_{i-2}^\top \; (\mb_{i-1}^{\mathbf{s}})^\top ]^\top$, which satisfy $\|\ma - {\mq} {\mb}\|_{\mathrm{F}} \le \tau$ with high probability.

Solving \cref{1st opt prob} is equivalent to solving the following ``dual'' formulation:
\begin{align}
\label{2nd opt prob}
\max\ |\mathbf{r}| \quad\text{subject to}\quad \big\| \me_{i-1} + \mq_{i-1}^{\mathbf{r}}\mb_{i-1}^{\mathbf{r}} \big\|_{\mathrm{F}} \le \tau.
\end{align}
This equivalence follows immediately from the fact that $|\mathbf{r}|=b-|\mathbf{s}|$ and
\begin{align*}
\me_{i-1} + \mq_{i-1}^{\mathbf{r}}\mb_{i-1}^{\mathbf{r}}
&=
\me_{i-1} + \left( \mq_{i-1}\mb_{i-1} - \mq_{i-1}^{\mathbf{s}}\mb_{i-1}^{\mathbf{s}} \right) \\
&=
\left( \me_{i-1} + \mq_{i-1}\mb_{i-1} \right) - \mq_{i-1}^{\mathbf{s}}\mb_{i-1}^{\mathbf{s}} \\
&=
\me_{i-2} - \mq_{i-1}^{\mathbf{s}}\mb_{i-1}^{\mathbf{s}}
.
\end{align*}

\noindent
\textbf{Computational Challenge:} 
A naive approach for solving \cref{2nd opt prob} is to start with $\|\me_{i-1}\|_\mathrm{F}$ and \textit{increase} it by sequentially adding rank-$1$ outer products. However, forming the intermediate matrices explicitly is computationally expensive, and our method would lose its matrix-free advantage.

Instead, we only need to track how the Frobenius norm changes when we add rank-$1$ outer products from the final block $\mq_{i-1} \mb_{i-1}$. Because the columns of $\mq_{i-1}$ are orthonormal, adding or subtracting a rank-1 component only changes the overall Frobenius norm by the magnitude of the row vector in $\mb_{i-1}$. This is stated as the following theorem:
\begin{theorem} \label{thm:rr_identity}
Let $\mq_j$ and $\mb_j$ be the $j$-th blocks computed by \cref{alg:adapt_fro} (in exact arithmetic), and let $\me_{j}$ be the corresponding residual as defined in \cref{eq:ei}. Then, for any index set $\mathbf{r}\subset\{1,\ldots,b\}$, we have the following relation:
\begin{equation*}
\big\| \me_{j} + \mq_{j}^{\mathbf{r}}\mb_{j}^{\mathbf{r}} \big\|_{\mathrm{F}}^{2} = \|\me_{j}\|_{\mathrm{F}}^{2} + \|\mb_{j}^{\mathbf{r}}\|_{\mathrm{F}}^{2}.
\end{equation*}
\end{theorem}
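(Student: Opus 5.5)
Expanding the square of the Frobenius norm gives
\[
\big\| \me_{j} + \mq_{j}^{\mathbf{r}}\mb_{j}^{\mathbf{r}} \big\|_{\mathrm{F}}^{2}
= \|\me_j\|_{\mathrm{F}}^2 + 2\,\mathrm{tr}\big(\me_j^\top \mq_j^{\mathbf{r}}\mb_j^{\mathbf{r}}\big) + \|\mq_j^{\mathbf{r}}\mb_j^{\mathbf{r}}\|_{\mathrm{F}}^2 .
\]
So the identity reduces to two facts. First, the cross term vanishes. Second, $\|\mq_j^{\mathbf{r}}\mb_j^{\mathbf{r}}\|_{\mathrm{F}} = \|\mb_j^{\mathbf{r}}\|_{\mathrm{F}}$.

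The second fact is immediate. The columns of $\mq_j$ are orthonormal, so any column subset $\mq_j^{\mathbf{r}}$ also has orthonormal columns. Hence $(\mq_j^{\mathbf{r}})^\top\mq_j^{\mathbf{r}} = \mathbf{I}$, and
\[
\|\mq_j^{\mathbf{r}}\mb_j^{\mathbf{r}}\|_{\mathrm{F}}^2 = \mathrm{tr}\big((\mb_j^{\mathbf{r}})^\top (\mq_j^{\mathbf{r}})^\top\mq_j^{\mathbf{r}}\mb_j^{\mathbf{r}}\big) = \|\mb_j^{\mathbf{r}}\|_{\mathrm{F}}^2 .
\]

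The cross term is the main step. It suffices to show $\mq_j^\top \me_j = \mathbf{0}$, because then $(\mq_j^{\mathbf{r}})^\top\me_j = \mathbf{0}$ and
\[
\mathrm{tr}\big(\me_j^\top \mq_j^{\mathbf{r}}\mb_j^{\mathbf{r}}\big) = \mathrm{tr}\big(((\mq_j^{\mathbf{r}})^\top\me_j)^\top \mb_j^{\mathbf{r}}\big) = 0 .
\]

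In exact arithmetic, \cref{alg:adapt_fro} re-orthogonalizes $\mq_i$ against $\mq_1,\ldots,\mq_{i-1}$. I will argue that in exact arithmetic this makes $[\mq_1\;\cdots\;\mq_j]$ have orthonormal columns. The reason is that $\my_i = \me_{i-1}\mO_i$ has range inside $\mathrm{range}(\me_{i-1})$, which (inductively) is orthogonal to the earlier blocks. In any case, the re-orthogonalization line enforces $\mq_k^\top\mq_i = \mathbf{0}$ for $k<i$ directly, so I will simply use that.

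Then, since $\mb_k = \mq_k^\top\ma$,
\[
\mq_j^\top\me_j = \mq_j^\top\ma - \sum_{k=1}^{j}\mq_j^\top\mq_k\,\mb_k = \mb_j - \mq_j^\top\mq_j\,\mb_j = \mathbf{0},
\]
using $\mq_j^\top\mq_k = \mathbf{0}$ for $k<j$ and $\mq_j^\top\mq_j = \mathbf{I}$.

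Combining the two facts yields the claim. The only delicate point is justifying mutual orthogonality of the blocks, which in exact arithmetic comes from the re-orthogonalization step. It also needs $\mb_k$ to be computed as $\mq_k^\top\ma$ with the final (re-orthogonalized) $\mq_k$, which is exactly what the algorithm does.
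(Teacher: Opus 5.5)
Your proof is correct and takes the same route as the paper, which simply asserts that $\me_j$ is orthogonal to the range of $\mq_j$ by construction and then invokes the matrix Pythagorean theorem. Your three steps spell out what the paper leaves implicit: expanding the square, checking $\mq_j^\top\me_j = \mathbf{0}$ from block orthogonality and $\mb_k = \mq_k^\top\ma$, and noting $\|\mq_j^{\mathbf{r}}\mb_j^{\mathbf{r}}\|_{\mathrm{F}} = \|\mb_j^{\mathbf{r}}\|_{\mathrm{F}}$.
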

\begin{proof}
By the construction of the QB factorization, the residual $\me_{j}$ is orthogonal to the column space of $\mq_{j}$. Because $\mq_{j}^{\mathbf{r}}$ is a submatrix of $\mq_{j}$ with orthonormal columns, the columns of $\me_j$ are orthogonal to the columns of $\mq_{j}^{\mathbf{r}}\mb_{j}^{\mathbf{r}}$. The identity then follows immediately from the matrix Pythagorean theorem.
\end{proof}

\vspace{1em}
Consequently, applying \cref{thm:rr_identity} to the final block, the optimization problem in \cref{2nd opt prob} reduces to selecting the largest subset of rows from $\mb_{i-1}$ whose squared Frobenius norms can be added to the squared residual norm without violating $\|\me_{i-1}\|_{\mathrm{F}}^{2} + \|\mb_{i-1}^{\mathbf{r}}\|_{\mathrm{F}}^{2} \le \tau^{2}.$ To summarize, we do not need to form any explicit matrices.

\vspace{1em}
\begin{center}
   \fbox{\parbox{0.65\linewidth}{We only need to compute the row norms of matrix $\mb_{i-1}$!
  }}
\end{center}
\vspace{1em}

To retain as few rank-1 outer products as possible, we sort the row norms of matrix $\mb_{i-1}$ in ascending order; start with $\|\me_{i-1}\|_\mathrm{F}^2$; and {increase} it by sequentially adding the row norms (squared) to just cross $\tau^2$. This is summarized in \cref{rr - F norm}.
Of course, we do not have the exact $\|\me_{i-1}\|_\mathrm{F}$ (and $\|\ma\|_\mathrm{F}$) but instead use their randomized estimators in \cref{eq:est_me,eq:est_ma}.

\begin{algorithm}
\caption{\texttt{Prune\_Rank\_Fro} - Rank pruning in the Frobenius norm}
\label{rr - F norm}
\begin{algorithmic}[1]
\REQUIRE
Matrix $\mb$ with $b$ rows, error (indicator) $\alpha$, and tolerance $\tau$

\ENSURE Index set $\mathbf{s} \subseteq \{1, 2, \dots, b\}$ of columns/rows to select.

\item[]

\STATE Compute $\mb$'s row norms $\{w_1, w_2, \ldots, w_b\}$.

\STATE Sort $\{w_1, w_2, \ldots, w_b\}$ in ascending order such that
$
w_{\pi(1)} \leq w_{\pi(2)} \leq \cdots \leq w_{\pi(b)}
$
for some permutation $\pi$.

\STATE $\mathbf{r} \gets \emptyset$ \algcomment{Initialize the set of redundant indices}
\FOR{$j = 1, \dots, b$}
    \STATE $\alpha \gets \sqrt{\alpha^2 + w_{\pi(j)}^2 }$
    \IF{$\alpha \ge \tau$}
        \STATE \textbf{break} \algcomment{Adding another row violates the tolerance}
    \ENDIF
    \STATE $\mathbf{r} \gets \mathbf{r} \cup \{\pi(j)\}$ \algcomment{Safely discard this index}
\ENDFOR

\STATE $\mathbf{s} \gets \{1, 2, \dots, b\} \setminus \mathbf{r}$ \algcomment{Retain all non-discarded indices}
\RETURN $\mathbf{s}$
\end{algorithmic}
\end{algorithm}

\subsection{Adjoint-free Adaptive Range Finder} \label{sec:adjoint_free_frob}

For applications where only the orthonormal basis $\mq$ is required and the projection $\mb = \mq^\top\ma$ is not explicitly needed, we present a variant of \cref{alg:adapt_fro} that requires only the forward operator $\ma$. By eliminating the need for the adjoint operator $\ma^\top$, this variant is both more broadly applicable and computationally faster. This task is commonly referred to as the \emph{range finder} problem~\cite{halko2011finding, martinsson2020randomized}.

In \cref{alg:adapt_fro}, the matrix $\mb$ is utilized in two specific locations: (i) the computation of the implicit residual $\my_i$ via the term $\mb_k \mO_i$, and (ii) the rank-pruning step which requires the row norms of $\mb_{i-1}$. We address these two requirements without the adjoint operator as follows.

First, we observe that the term $\mb_k \mO_i$ ($k=1,2,\ldots, i-1$) can be evaluated by shifting the parentheses to the right:
\begin{equation} \label{eq:bi}
\mb_k \mO_i = (\mq_k^\top \ma) \mO_i = \mq_k^\top (\ma \mO_i) = \mq_k^\top \mz_i 
\end{equation}
By computing $\mq_k^\top \mz_i$, we obtain the necessary projection using only the already-computed sample matrix $\mz_i$.

Second, the rank-pruning procedure typically requires the row norms of the final block $\mb_{i-1}$ to determine which columns of $\mq_{i-1}$ are redundant. In the adjoint-free case, we propose using the row norms of the sketch $\tilde{\mb}_{i-1} = \mb_{i-1} \mO_i$ as a proxy. According to \cref{thm:expect thm - Y_i}, the squared row norms of $\tilde{\mb}$ are unbiased Hutchinson estimators of the squared row norms of $\mb_{i-1}$. Specifically, for the $j$-th row of the final block $\mb_{i-1}$, denoted by $\mathbf{b}_j$, the pruning algorithm relies on the estimator $\hat{w}_j = \|\mathbf{b}_j \mO_i\|_2^2$. While we have established that $\mathbb{E}[\hat{w}_j] = \|\mathbf{b}_j\|_2^2$, the reliability of the pruning decision depends on the variance of this estimator, which is given by $\text{Var}(\hat{w}_j) = \frac{2}{b} \|\mathbf{b}_j\|_2^4$.
This variance decreases as the block size $b$ increases, meaning that larger block sizes not only improve BLAS-3 efficiency but also enhance the precision of the adjoint-free rank refinement. In practice, even with moderate block sizes (e.g., $b=16$), the probability of significantly misidentifying the ``low-energy'' rows is small. Furthermore, since the pruning step is only applied to the final block, any slight over-retention of columns due to estimator variance results only in a tiny increase in the final rank.

We summarize our adjoint-free method in \cref{alg:adjoint_free_frob}.

\begin{algorithm}
\caption{\texttt{randQB\_AF\_Fro}: Adaptive, adjoint-free randomized range finder in Frobenius norm}
\label{alg:adjoint_free_frob}
\begin{algorithmic}[1]

\REQUIRE Matrix $\ma \in \mathbb{R}^{m \times n}$, (absolute) tolerance\footnotemark $\varepsilon > 0$, and block size $b$.
\ENSURE Matrix $\mq$ with orthonormal columns such that $\|\ma - \mq\mq^\top\ma\|_{\mathrm{F}} \leq \varepsilon$ with high probability.

\item[]

\FOR {$i=1,2,\ldots$}
\STATE $\mathbf{\Omega}_i \gets \texttt{randn}(n, b)/\sqrt{b}$ \algcomment{normalized Gaussian block}
    \STATE $\mathbf{Z}_i \gets \mathbf{A}\mathbf{\Omega}_i$ \algcomment{Forward matvec only}
    \STATE $\my_i \gets \mz_i - \sum_{k = 1}^{i-1}\mq_k\left(\mq_k^\top \mz_i\right)$ \algcomment{Implicit application via \eqref{eq:bi}}
    \IF{$\|\my_i\|_{\mathrm{F}} \leq \varepsilon$}
        \STATE \textbf{break} 
        \algcomment{Last computed block is $\mq_{i-1}$}
    \ENDIF

    \STATE $\mathbf{Q}_i \gets \texttt{orth}(\mathbf{Y}_i)$
    \algcomment{$\mathbf{B}_i$ is not computed}
    \STATE $\mq_i \gets \texttt{orth}\left(\mq_i- \sum_{k=1}^{i-1}\mq_k\left(\mq_k^\top\mq_i\right)\right)$ \label{line: re-orth} \algcomment{Re-orthogonalization}
\ENDFOR

\STATE $\tilde{\mb}_{i-1} \gets \mq_{i-1}^\top\mz_i$ \label{line:compute-t} 
\algcomment{$\tilde{\mb}_{i-1}$ is a sketch of $\mb_{i-1}$ via \cref{eq:bi}}

\STATE $\textbf{s} \gets \texttt{Prune\_Rank\_Fro}(\tilde{\mb}_{i-1}, \, \|\my_i\|_{\mathrm{F}},\, \varepsilon)$

\STATE $\mq \gets [\mq_1 \; \ldots \; \mq_{i-2} \; \mq_{i-1}^\textbf{s}]$
\algcomment{$\mq_{i-1}^\textbf{s}$ contain the column subset \textbf{s} in $\mq_{i-1}$}

\end{algorithmic}
\end{algorithm}
\footnotetext{See \cref{rmk:relative tolerance} for the modification for relative tolerance.}

\begin{remark}[Relation to  an Existing Randomized Range Finder]
Compared to the standard fixed-tolerance range finder \cite[Algorithm 13]{martinsson2020randomized}, \cref{alg:adjoint_free_frob} introduces two enhancements: (i) a relative tolerance can be implemented in a straightforward mechanism via \cref{rmk:relative tolerance}, and (ii) a matrix-free rank pruning step that produces a more compact basis with negligible overhead. That reference proposes the basic adjoint-free range finder but reports no numerical results for it; our numerical study (\cref{sec:numerical}) fills this gap.
\end{remark}

\section{Adaptive Matrix-Free Algorithms in Spectral Norm} \label{sec: 4}

We now extend the matrix-free framework to the spectral norm (operator 2-norm). In this setting, the objective is to determine a target rank $k$ dynamically such that the following approximation holds:
\begin{equation} \label{eq:aqb_spec}
\| \mathbf{A} - \mathbf{Q}\mathbf{B} \|_2 \le \tau,
\end{equation}
where $\mathbf{Q} \in \mathbb{R}^{m \times k}$ has orthonormal columns and $\mathbf{B} = \mathbf{Q}^\top\mathbf{A} \in \mathbb{R}^{k \times n}$. The tolerance $\tau$ is a user-defined threshold, provided as either an absolute value $\tau = \varepsilon$ or a relative value $\tau = \varepsilon \|\mathbf{A}\|_2$.

As in the Frobenius-norm case, we seek to construct $\mq$ and $\mb$ incrementally using only matrix-vector products with $\ma$ and $\ma^\top$. However, the spectral norm presents a unique challenge: How to estimate the spectral norm of the residual matrix given a pair of factors $\mq$ and $\mb$. To address this, we resolve the following key algorithmic challenges:

\begin{enumerate}

\item \textbf{Residual Estimation (\cref{sec:err_ind_spec}):} We introduce a ``look-ahead'' randomized error indicator. By utilizing the fact that the QB framework naturally targets the dominant singular spectrum, the norm of the most recently computed block $\mathbf{B}_i$ provides an efficient estimate for the residual norm of the preceding approximation.

\item \textbf{Block Size Selection (\cref{sec:rr_spec}):} We analyze strategies for choosing the block size parameter to maximize computational efficiency while preventing severe overestimation of the numerical rank.

\item \textbf{Adjoint-Free Variant (\cref{sec:adjoint_free_spec}):} We introduce a forward-only modification that eliminates the need for adjoint queries ($\vy \mapsto \ma^\top \vy$) when only the basis $\mq$ is required.

\end{enumerate}

\subsection{Randomized Error Indicator and Matrix-Free Algorithm}
\label{sec:err_ind_spec}

The primary difficulty in an adaptive spectral-norm QB algorithm is that the stopping criterion depends on $\|\mathbf{E}\|_2$, where $\mathbf{E} = \mathbf{A} - \mathbf{Q} \mathbf{B}$ for a given pair of factors $\mq$ and $\mb$. We observe that at each iteration, the blocked QB method, i.e., \cref{alg:blocked_QB}, generates a basis $\mathbf{Q}_i$ whose span approximates the subspace spanned by dominant left singular vectors of the residual $\mathbf{E}_{i-1}$. If the block size $b$ is sufficiently large (e.g., $b=16$), then, with probability almost 1, the error $\| \me_{i-1} - \mq_i \mq_i^\top  \me_{i-1} \|_2$ is close to the minimum error in rank-$(b-p)$ approximation~\cite[Section 10]{halko2011finding}, where $p$ is the oversampling parameter (say $p=5$ or $p=10$).

Consequently, the matrix 
\[
\mathbf{B}_i = \mathbf{Q}_i^\top \mathbf{A} = \mathbf{Q}_i^\top \left( \mathbf{E}_{i-1} + \sum_{k=1}^{i-1} \mq_k \mb_k \right) = \mathbf{Q}_i^\top \mathbf{E}_{i-1}
\]
 captures the action of the residual on its most significant subspace, where the first equality is by the definition of $\mathbf{B}_i$, the second equality is by the definition of $\mathbf{E}_{i-1}$ in \cref{eq:ei}, and the last equality holds by the orthogonality of the basis matrices $\mathbf{Q}_i$.

\vspace{1em}
\begin{center}
\fbox{\parbox{0.9\linewidth}{Therefore, we use the spectral norm of the projected matrix as a proxy for the true error:
    \begin{equation*}
        \|\mb_{i}\|_2
        \approx
        \|\me_{i-1}\|_2.
    \end{equation*}
  }}
\end{center}
\vspace{1em}

We summarize our adaptive, matrix-free algorithm in \cref{alg:adapt_spec}.

\begin{remark}[Relative Tolerance] \label{rmk:relative_spec}
To adapt the stopping criterion for a relative tolerance $\tau = \varepsilon \|\mathbf{A}\|_2$, we utilize the first computed block as an initial estimate of the operator norm: $\|\mathbf{B}_1\|_2 \approx \|\mathbf{A}\|_2$. This allows the algorithm to remain entirely matrix-free even when the scale of $\mathbf{A}$ is unknown \textit{a priori}.
\end{remark}

\begin{remark}[Computing the spectral norm] 
To evaluate $\|\mathbf{B}_i\|_2$ efficiently, we avoid a full SVD. Since $\mathbf{B}_i \in \mathbb{R}^{b \times n}$ is a short-and-wide matrix, its spectral norm is the square root of the dominant eigenvalue of $\mathbf{B}_i \mathbf{B}_i^\top \in \mathbb{R}^{b \times b}$. Given that $b$ is small, a few steps of the power method suffice to determine this norm to sufficient accuracy with a cost of only $\mathcal{O}(nb)$ flops. 
\end{remark}

\begin{algorithm}

\caption{\texttt{randQB\_MF\_Spec}: Adaptive, matrix-free low-rank approximation in spectral norm}
\label{alg:adapt_spec}

\begin{algorithmic}[1]
\REQUIRE Matrix $\mathbf{A}\in \mathbb{R}^{m \times n }$, (absolute) tolerance\footnotemark $\varepsilon > 0$, and block size $b.$

\ENSURE Matrix $\mq$ with orthonormal columns and matrix $\mb$ such that $\|\mathbf{A} - \mq\mb\|_{2} \le \varepsilon$ with high probability.

\item[]

\FOR {$i=1,2,\ldots$}

    \STATE $\mathbf{\Omega}_i \gets \texttt{randn}(n, b)/\sqrt{b}$ \algcomment{normalized Gaussian block}
    \STATE $\mathbf{Z}_i \gets \mathbf{A}\mathbf{\Omega}_i$ \algcomment{$\mathbf{A}$ is accessed via matvec}
    \STATE $\my_i \gets \mz_i - \sum_{k = 1}^{i-1}\mq_k\left(\mb_k\mO_i\right)$
    \algcomment{Implicit residual application via \eqref{Y_i - full description}}
    \STATE $\mathbf{Q}_i \gets \texttt{orth}(\mathbf{Y}_i)$
    \algcomment{Thin QR decomposition}
    \STATE $\mathbf{Q}_i \gets \texttt{orth}\left(\mathbf{Q}_i - \sum_{k = 1}^{i-1}\mathbf{Q}_k\left(\mathbf{Q}_k^\top\mathbf{Q}_i\right)\right)$
    \algcomment{Re-orthogonalization}
    \STATE $\mathbf{B}_i \gets \mathbf{Q}_i^\top\mathbf{A}$ \algcomment{$\mathbf{A}$ is accessed via adjoint matvec}
    \IF{$\|\mb_i\|_{2} \leq \varepsilon$}
        \STATE \textbf{break} \algcomment{tolerance met; the look-ahead block $\mb_i$ is not retained}
    \ENDIF

\ENDFOR

\STATE $\left[\mU_{r}, \mathbf{W}_{r} \right] \gets \texttt{Prune\_Rank\_Spec}(\mb_{i-1}, \, \varepsilon)$
\algcomment{prune $\mb_{i-1}$, the last retained block}

\STATE $\mq \gets [\mq_1 \; \ldots \; \mq_{i-2} \; \left( \mq_{i-1} \mU_{r} \right) ]$

\STATE $\mb \gets \left[\mb_1^\top\; \cdots\; \mb_{i-2}^\top\; \mathbf{W}_r^\top \right]^\top$

\end{algorithmic}
\end{algorithm}
\footnotetext{See \cref{rmk:relative_spec} for the modification for relative tolerance.}

\subsection{Block Size Selection and Rank Pruning} \label{sec:rr_spec}

The block-size dilemma described in \cref{sec:rr} applies equally here: a large $b$ improves BLAS-3 throughput but lets the output rank $k = i \cdot b$ overshoot the minimal rank needed to meet the tolerance. We again prune the final retained block, but now in the spectral norm: instead of discarding rows by their norm, we truncate $\mathbf{B}_{i-1}$ according to its singular value distribution.

\vspace{1em}
\fbox{\parbox{0.85\linewidth}{We prune the final block $\mb_{i-1}$ to the fewest singular directions needed: retain those with singular value $\sigma_j(\mb_{i-1}) \ge \varepsilon$ and discard the rest, stripping away the excess rank while keeping the truncation error within $\varepsilon$.
  }}
\vspace{1em}

The change of criterion is dictated by the norm. In the Frobenius case, the orthogonality of $\mathbf{Q}_{i-1}$ makes the squared error additive over the rows of $\mathbf{B}_{i-1}$ (\cref{thm:rr_identity}), so pruning reduces to dropping rows of small norm. The spectral norm is not additive in this way; the quantity that governs it is the singular-value spectrum of the block $\mathbf{Q}_{i-1} \mathbf{B}_{i-1}$. Since $\mathbf{Q}_{i-1}$ has orthonormal columns, we compute the SVD of $\mathbf{B}_{i-1}$ and, invoking the Eckart--Young theorem, retain exactly the leading $r$ singular directions whose singular values reach the tolerance, discarding the trailing directions whose singular values---and hence whose contribution to the residual---fall below $\varepsilon$.

\vspace{1em}
\fbox{\parbox{0.85\linewidth}{Because the retained residual $\mathbf{E}_{i-1}$ is orthogonal to the final basis block $\mathbf{Q}_{i-1}$, the block-level truncation error combines with $\|\mathbf{E}_{i-1}\|_2$ in the Pythagorean sense rather than accumulating---so the pruned factors still meet the prescribed tolerance, up to the modest constant of \cref{thm:pruning_spec}.
  }}
\vspace{1em}

The procedure for rank pruning is summarized in \cref{alg:rr_spec}. Since $\mathbf{B}_{i-1}$ has dimensions $b \times n$, its SVD costs only $\mathcal{O}(nb^2)$ flops---negligible relative to the matvecs---yet it keeps the output rank close to the optimal numerical rank of the operator. The following theorem makes the tolerance guarantee precise.

\begin{algorithm}
\caption{\texttt{Prune\_Rank\_Spec}: Rank pruning in spectral norm}
\label{alg:rr_spec}
\begin{algorithmic}[1]
\REQUIRE Block $\mathbf{B} \in \mathbb{R}^{b \times n}$ and tolerance $\tau$.
\ENSURE Factors $\mathbf{U}_r$ and $\mathbf{W}_r = \mathbf{\Sigma}_r\mathbf{V}_r^\top$ such that $\|\mathbf{B} - \mathbf{U}_r \mathbf{W}_r\|_2 \le \tau$.

\item[]

\STATE $[\mathbf{U}, \mathbf{\Sigma}, \mathbf{V}] \gets \texttt{svd}(\mathbf{B}, \text{`econ'})$

\STATE Find indices $\mathbf{s} = \{j : \sigma_j(\mathbf{B}) \geq \tau \}$

\STATE $\mathbf{U}_r \gets \mathbf{U}(:, \mathbf{s}), \quad \mathbf{W}_r \gets \mathbf{\Sigma}(\mathbf{s}, \mathbf{s})\,\mathbf{V}(:, \mathbf{s})^\top$

\item[]
\algcomment{$\mathbf{W}_r$ formed by scaling the rows of $\mathbf{V}(:,\mathbf{s})^\top$}

\RETURN $\mathbf{U}_r, \mathbf{W}_r$

\end{algorithmic}
\end{algorithm}

\begin{theorem} \label{thm:pruning_spec}
Suppose \texttt{randQB\_MF\_Spec} (\cref{alg:adapt_spec}) is applied to $\mathbf{A} \in \mathbb{R}^{m \times n}$ with absolute tolerance $\varepsilon$ and terminates at iteration $i$, i.e., the look-ahead block satisfies $\|\mathbf{B}_i\|_2 \le \varepsilon$. Let $\tilde{\mathbf{Q}} = [\mathbf{Q}_1, \dots, \mathbf{Q}_{i-1}]$ and $\tilde{\mathbf{B}} = [\mathbf{B}_1^\top, \dots, \mathbf{B}_{i-1}^\top]^\top$ be the retained factors and $\mathbf{Q}, \mathbf{B}$ the pruned factors returned by the algorithm. Then
\begin{align*}
\|\mathbf{A} - \mathbf{Q}\mathbf{B}\|_2 \leq \sqrt{\eta^2 +1}\;\varepsilon,
\end{align*}
where $\eta$ is the constant from \cref{thm:arvind} applied to the residual $\mathbf{E}_{i-1} = \mathbf{A} - \tilde{\mathbf{Q}}\tilde{\mathbf{B}}$ defined in \eqref{eq:ei}.
\end{theorem}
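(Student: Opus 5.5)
We split the final error into the pruning error on the last block and the residual $\mathbf{E}_{i-1}$, which lies in orthogonal column spaces.

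\textbf{Step 1: decompose the error.} Write $\mathbf{B}_{i-1} = \mathbf{U}\mathbf{\Sigma}\mathbf{V}^\top$. The algorithm returns
\[
\mathbf{Q}\mathbf{B} = \sum_{k=1}^{i-2}\mathbf{Q}_k\mathbf{B}_k + \mathbf{Q}_{i-1}\mathbf{U}_r\mathbf{W}_r .
\]
Hence
\[
\mathbf{A}-\mathbf{Q}\mathbf{B} = \mathbf{E}_{i-1} + \mathbf{Q}_{i-1}\mathbf{D}, \qquad \mathbf{D} := \mathbf{B}_{i-1}-\mathbf{U}_r\mathbf{W}_r = \mathbf{U}_{\perp}\mathbf{\Sigma}_{\perp}\mathbf{V}_{\perp}^\top ,
\]
where $\mathbf{U}_\perp,\mathbf{\Sigma}_\perp,\mathbf{V}_\perp$ collect the discarded singular triplets. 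By construction of \cref{alg:rr_spec}, every discarded singular value is $<\varepsilon$, so $\|\mathbf{D}\|_2<\varepsilon$.

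\textbf{Step 2: Pythagorean bound in the spectral norm.} As in the proof of \cref{thm:rr_identity}, in exact arithmetic $\mathbf{Q}_{i-1}^\top\mathbf{E}_{i-1}=0$. This holds because $\mathbf{E}_{i-1}=(\mathbf{I}-\tilde{\mathbf{Q}}\tilde{\mathbf{Q}}^\top)\mathbf{A}$ and $\tilde{\mathbf{Q}}$ has orthonormal columns containing $\mathbf{Q}_{i-1}$. Let $\mathbf{M}=\mathbf{E}_{i-1}+\mathbf{Q}_{i-1}\mathbf{D}$. Then
\[
\mathbf{M}^\top\mathbf{M} = \mathbf{E}_{i-1}^\top\mathbf{E}_{i-1} + \mathbf{D}^\top\mathbf{D},
\]
since the cross terms $\mathbf{E}_{i-1}^\top\mathbf{Q}_{i-1}\mathbf{D}$ vanish. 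For any unit $\mathbf{x}$,
\[
\|\mathbf{M}\mathbf{x}\|_2^2 = \|\mathbf{E}_{i-1}\mathbf{x}\|_2^2+\|\mathbf{D}\mathbf{x}\|_2^2 \le \|\mathbf{E}_{i-1}\|_2^2+\|\mathbf{D}\|_2^2 .
\]
This gives $\|\mathbf{A}-\mathbf{Q}\mathbf{B}\|_2^2\le \|\mathbf{E}_{i-1}\|_2^2+\varepsilon^2$.

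\textbf{Step 3: bound $\|\mathbf{E}_{i-1}\|_2$ via the look-ahead block.} The look-ahead block is $\mathbf{B}_i=\mathbf{Q}_i^\top\mathbf{A}=\mathbf{Q}_i^\top\mathbf{E}_{i-1}$, as derived in \cref{sec:err_ind_spec}. Here $\mathbf{Q}_i=\texttt{orth}(\mathbf{E}_{i-1}\mathbf{\Omega}_i)$ with Gaussian $\mathbf{\Omega}_i$, and the re-orthogonalization does not change $\mathbf{Q}_i$ in exact arithmetic, since the range of $\mathbf{E}_{i-1}$ is already orthogonal to $\tilde{\mathbf{Q}}$. So $\mathbf{B}_i$ is exactly the projection produced by one randomized QB step applied to $\mathbf{E}_{i-1}$. \Cref{thm:arvind} with index $1$ then gives
\[
\|\mathbf{E}_{i-1}\|_2=\sigma_1(\mathbf{E}_{i-1})\le\eta\,\sigma_1(\mathbf{B}_i)\le \eta\varepsilon .
\]
Combining with Step 2 yields $\|\mathbf{A}-\mathbf{Q}\mathbf{B}\|_2\le\sqrt{\eta^2+1}\,\varepsilon$.

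The main obstacle is Step 3. We must justify applying \cref{thm:arvind} to $\mathbf{E}_{i-1}$, which depends on the earlier random blocks. We would condition on $\mathbf{\Omega}_1,\dots,\mathbf{\Omega}_{i-1}$, so that $\mathbf{E}_{i-1}$ is fixed and $\mathbf{\Omega}_i$ is an independent Gaussian. The bound then holds with the stated failure probability and $\eta=\eta_1$ for the block-size parameters.

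A subtlety is that the termination index $i$ is itself random. We would read the theorem as conditioned on the event that the algorithm stops at $i$. Alternatively, we would note that the bound holds at every iteration with high probability and take a union bound over iterations; the paper's phrasing permits this.
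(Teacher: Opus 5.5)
Your proof is correct and follows the paper's argument essentially step for step. You use the same splitting $\mathbf{A}-\mathbf{Q}\mathbf{B}=\mathbf{E}_{i-1}+\mathbf{Q}_{i-1}\mathbf{D}$, the cross terms vanishing because $\mathbf{Q}_{i-1}^\top\mathbf{E}_{i-1}=\mathbf{0}$, the bound $\|\mathbf{E}_{i-1}\|_2\le\eta\|\mathbf{B}_i\|_2\le\eta\varepsilon$ from \cref{thm:arvind} applied to the look-ahead block, and $\|\mathbf{D}\|_2<\varepsilon$ from the pruning rule. Your extra remarks (conditioning on $\mathbf{\Omega}_1,\dots,\mathbf{\Omega}_{i-1}$, the random stopping index, and the re-orthogonalization not changing $\mathbf{Q}_i$ in exact arithmetic) make explicit probabilistic details that the paper leaves implicit, without changing the argument.
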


\begin{proof}
Note that $[\mathbf{U}_r, \mathbf{W}_r] = \texttt{Prune\_Rank\_Spec}(\mathbf{B}_{i-1}, \varepsilon)$ with $\mathbf{W}_r = \mathbf{\Sigma}_r\mathbf{V}_r^\top$, where $r$ is the number of singular values of $\mathbf{B}_{i-1}$ that are at least $\varepsilon$, i.e., $\sigma_r(\mathbf{B}_{i-1}) \geq \varepsilon > \sigma_{r+1}(\mathbf{B}_{i-1})$. Let $\mathbf{N} = \mathbf{B}_{i-1} - \mathbf{U}_r \mathbf{W}_r$ be the error incurred by truncating the last block. Since the pruning replaces $\mathbf{Q}_{i-1}\mathbf{B}_{i-1}$ by $\mathbf{Q}_{i-1} (\mathbf{U}_r \mathbf{W}_r )$ and leaves the other blocks unchanged, we decompose the truncated residual as:
\begin{align*}
\mathbf{A} - \mathbf{Q}\mathbf{B}
=
(\mathbf{A} - \tilde{\mathbf{Q}}\tilde{\mathbf{B}}) + \mathbf{Q}_{i-1}(\mathbf{B}_{i-1} - \mathbf{U}_r \mathbf{W}_r)
=
\mathbf{E}_{i-1} + \mathbf{Q}_{i-1}\mathbf{N}.
\end{align*}
Since $\mathbf{Q}_{i-1}^\top \mathbf{E}_{i-1} = \mathbf{Q}_{i-1}^\top \mathbf{A} - \mathbf{Q}_{i-1}^\top \tilde{\mathbf{Q}}\tilde{\mathbf{B}} = \mathbf{B}_{i-1} - \mathbf{B}_{i-1} = \mathbf{0}$, the cross terms vanish in $(\mathbf{E}_{i-1} + \mathbf{Q}_{i-1}\mathbf{N})^\top(\mathbf{E}_{i-1} + \mathbf{Q}_{i-1}\mathbf{N}) = \mathbf{E}_{i-1}^\top\mathbf{E}_{i-1} + \mathbf{N}^\top\mathbf{N}$, and the sub-additivity of the largest eigenvalue gives:
\begin{align} \label{abs norm - spectral - triangle}
\|\mathbf{A} - \mathbf{Q}\mathbf{B}\|_2^2
=
\|\mathbf{E}_{i-1} + \mathbf{Q}_{i-1}\mathbf{N}\|_2^2
\leq
\|\mathbf{E}_{i-1}\|_2^2 + \|\mathbf{N}\|_2^2.
\end{align}
The look-ahead block satisfies $\mathbf{B}_i = \mathbf{Q}_i^\top\mathbf{A} = \mathbf{Q}_i^\top\mathbf{E}_{i-1}$, so applying \cref{thm:arvind} to $\mathbf{E}_{i-1}$ (with $\mathbf{B}_i$ as its projection) gives $\|\mathbf{E}_{i-1}\|_2 = \sigma_1(\mathbf{E}_{i-1}) \le \eta\,\sigma_1(\mathbf{B}_i) = \eta\,\|\mathbf{B}_i\|_2 \le \eta\varepsilon$. By the Eckart--Young theorem, the truncation error is $\|\mathbf{N}\|_2 = \sigma_{r+1}(\mathbf{B}_{i-1}) < \varepsilon$. Substituting these into \eqref{abs norm - spectral - triangle} yields:
\begin{align*}
\|\mathbf{A} - \mathbf{Q}\mathbf{B}\|_2^2 < \eta^2\varepsilon^2 + \varepsilon^2 = (\eta^2 + 1)\varepsilon^2.
\end{align*}
Taking the square root completes the proof.
\end{proof}

\begin{remark}
The constant $\eta$ from \cref{thm:arvind} is pessimistic. When the block size is sufficiently large so that the range finder captures the dominant singular subspace of $\mathbf{E}_{i-1}$ almost entirely, $\eta$ is close to $1$, and the attained error is very close to the prescribed $\varepsilon$. The numerical experiments in \cref{sec:numerical} confirm that the observed error tracks $\varepsilon$ closely, well within the bound of \cref{thm:pruning_spec}.
\end{remark}

\subsection{Adjoint-free Adaptive Range Finder}
\label{sec:adjoint_free_spec}

As in the Frobenius-norm case (\cref{sec:adjoint_free_frob}), many applications require only the orthonormal basis $\mq$ and not the projection $\mb = \mq^\top\ma$. We therefore present a variant of \cref{alg:adapt_spec} that accesses $\ma$ through the forward operator alone, eliminating the adjoint queries $\vy \mapsto \ma^\top\vy$.

In \cref{alg:adapt_spec} the adjoint is used in two places: (i) the look-ahead indicator $\|\mb_i\|_2$, where $\mb_i = \mq_i^\top\ma$, and (ii) the rank-pruning step, which truncates $\mb_{i-1}$ by its singular values. We replace both with forward-only quantities. Both replacements rest on the same principle that \cref{lem:subspace_embedding} justifies: a matrix's singular values are read from a Gaussian sketch of it. 

First, recall that $\mathbf{E}_{i-1} = \ma - \sum_{k=1}^{i-1}\mq_k\mq_k^\top\ma$ denotes the residual after the first $i-1$ blocks. Its random sketch $\mathbf{E}_{i-1}\mathbf{\Omega}_i$ can be assembled using only the forward action $\mz_i = \ma \mathbf{\Omega}_i$:
\begin{equation} \label{eq:af_resid_spec}
\mathbf{E}_{i-1}\mathbf{\Omega}_i
= 
\left(\ma - \sum_{k=1}^{i-1}\mq_k\mq_k^\top\ma\right)\mathbf{\Omega}_i
= 
\mz_i - \sum_{k=1}^{i-1}\mq_k\left(\mq_k^\top\mz_i\right)
=
\my_i,
\end{equation}
where we have used $(\mq_k^\top\ma)\mathbf{\Omega}_i = \mq_k^\top\mz_i$ to avoid forming $\mb_k$. As a result, the spectral norm of this sketch yields a look-ahead estimate of the residual,
\begin{equation} \label{eq:af_indicator_spec}
\|\my_i\|_2 = \| \mathbf{E}_{i-1}\mathbf{\Omega}_i \|_2 \approx \|\mathbf{E}_{i-1}\|_2 ,
\end{equation}
the spectral-norm analogue of the Frobenius indicator of \cref{sec:adjoint_free_frob}. It replaces $\|\mb_i\|_2$ as a crude stopping criterion and is estimated by a few steps of power iteration applied directly to $\my_i$.

Second, the pruning step of \cref{alg:rr_spec} truncates $\mb_{i-1}$ by its singular values, but $\mb_{i-1} = \mq_{i-1}^\top\ma$ is never formed here. Exactly as in the Frobenius-norm case \eqref{eq:bi}, we replace it with the forward sketch
\begin{equation} \label{eq:af_prune_spec}
\tilde{\mb}_{i-1} = \mq_{i-1}^\top\mz_i = \mq_{i-1}^\top\ma\,\mathbf{\Omega}_i = \mb_{i-1}\mathbf{\Omega}_i ,
\end{equation}
obtained from the current block's sample $\mz_i = \ma\mathbf{\Omega}_i$ at no additional matvec cost. Since $\mathbf{\Omega}_i^\top$ is, with high probability, a subspace embedding for the row space of $\mb_{i-1}$, \cref{lem:subspace_embedding} gives $\sigma_j(\tilde{\mb}_{i-1}) \approx \sigma_j(\mb_{i-1})$. Computing the SVD $\tilde{\mb}_{i-1} = \mathbf{U}\mathbf{\Sigma}\mathbf{V}^\top$ and retaining the $r$ left singular vectors with $\sigma_j(\tilde{\mb}_{i-1}) \geq \varepsilon$ gives the pruned basis $\mq = [\mq_1\;\ldots\;\mq_{i-2}\; \left(\mq_{i-1}\mathbf{U}_r \right)]$, the adjoint-free counterpart of \cref{alg:rr_spec}.

We summarize the procedure in \cref{alg:adjoint_free_spec}. A caveat is in order: guaranteeing a genuine $\varepsilon$-subspace embedding would require the block size $b$ to grow like $1/\varepsilon^2$, far more than is practical, so the sketch delivers only a same-order estimate of the singular values rather than a highly accurate one. We emphasize, then, that \eqref{eq:aqb_spec} is to be understood in an approximate sense: unlike the Frobenius-norm methods of \cref{sec: 3}, which certify the prescribed bound (with high probability), the spectral-norm algorithms do not guarantee \eqref{eq:aqb_spec} exactly, but instead keep the computed error $\|\mathbf{A} - \mathbf{Q}\mathbf{B}\|_2$ \emph{on the order of} the tolerance $\tau$.

\begin{algorithm}
\caption{\texttt{randQB\_AF\_Spec}: Adaptive, adjoint-free randomized range finder in spectral norm}
\label{alg:adjoint_free_spec}
\begin{algorithmic}[1]
\REQUIRE Matrix $\ma \in \mathbb{R}^{m \times n}$, (absolute) tolerance\footnotemark $\varepsilon > 0$, and block size $b$.
\ENSURE Matrix $\mq$ with orthonormal columns such that $\|\ma - \mq\mq^\top\ma\|_2 \lesssim \varepsilon$ with high probability.

\item[]

\FOR {$i = 1, 2, \ldots$}
    \STATE $\mathbf{\Omega}_i \gets \texttt{randn}(n, b)/\sqrt{b}$ \algcomment{normalized Gaussian block}
    \STATE $\mathbf{Z}_i \gets \mathbf{A}\mathbf{\Omega}_i$ \algcomment{Forward matvec only}
    \STATE $\my_i \gets \mz_i - \sum_{k=1}^{i-1}\mq_k\left(\mq_k^\top\mz_i\right)$ \algcomment{Adjoint-free residual sketch \eqref{eq:af_resid_spec}}
    \IF{$\|\my_i\|_2 \leq \varepsilon$}
        \STATE \textbf{break} \algcomment{tolerance met; the look-ahead block is not retained, see \eqref{eq:af_indicator_spec}}
    \ENDIF
    \STATE $\mathbf{Q}_i \gets \texttt{orth}(\my_i)$ \algcomment{Thin QR; $\mb_i$ is not computed}
    \STATE $\mathbf{Q}_i \gets \texttt{orth}\left(\mathbf{Q}_i - \sum_{k=1}^{i-1}\mq_k\left(\mq_k^\top\mathbf{Q}_i\right)\right)$ \algcomment{Re-orthogonalization}
\ENDFOR

\STATE $\tilde{\mb}_{i-1} \gets \mq_{i-1}^\top\mz_i$ \algcomment{forward sketch of $\mb_{i-1}$ via \eqref{eq:af_prune_spec}}
\STATE $[\mathbf{U}_r, \sim] \gets \texttt{Prune\_Rank\_Spec}(\tilde{\mb}_{i-1}, \, \varepsilon)$
\algcomment{prune sketch of the last retained block}

\STATE $\mq \gets [\mq_1 \; \ldots \; \mq_{i-2} \; \left( \mq_{i-1}\mathbf{U}_r \right) ]$
\end{algorithmic}
\end{algorithm}
\footnotetext{For a relative tolerance $\tau = \varepsilon\|\ma\|_2$, the operator norm $\|\ma\|_2$ is estimated from the forward sketches as described in \cref{rmk:relative_af_spec}.}

\begin{remark}[Relative tolerance] \label{rmk:relative_af_spec}
For a relative tolerance $\tau = \varepsilon\|\ma\|_2$, the adjoint-free method needs a matrix-free estimate of $\|\ma\|_2$. Each forward sketch $\mz_i = \ma\mathbf{\Omega}_i$ is itself a randomized sketch of $\ma$, so its spectral norm already supplies one: $\|\mz_i\|_2 \approx \|\ma\|_2$. Since the sketches $\mz_1,\ldots,\mz_i$ are all computed during the iteration, a sharper estimate is available at no extra matvec cost by concatenating them into $\mathcal{Z}_i = [\mz_1\;\cdots\;\mz_i] = \ma\,[\mathbf{\Omega}_1\;\cdots\;\mathbf{\Omega}_i]$, a sketch of $\ma$ with $i\,b$ columns, giving
\begin{equation*}
\|\ma\|_2 \approx \frac{1}{\sqrt{i}}\,\big\|[\mz_1\;\cdots\;\mz_i]\big\|_2 .
\end{equation*}
The estimate sharpens as $i$ grows, and the factor $1/\sqrt{i}$ rescales for the $i\,b$ columns of the concatenated sketch, reducing to $\|\mz_1\|_2 \approx \|\ma\|_2$ at $i=1$.
\end{remark}

\section{Numerical Results} \label{sec:numerical}

We evaluate the proposed matrix-free adaptive algorithms with two objectives: to show (1) that the adaptive stopping criteria accurately capture the target tolerance across a range of singular-value decay profiles, and (2) that the rank-pruning strategies yield a near-optimal output rank. All computations are performed in double precision.

\paragraph{Reproducibility} The MATLAB implementations of the proposed algorithms, together with the driver scripts that reproduce all numerical experiments and figures reported in this section, are publicly available at \url{https://github.com/SMART-Algebra/Adaptive-Matfree-Lowrank}.

\subsection{Experimental Setup and Baselines}\label{sec:setup}
We compare the proposed algorithms against the following baselines:

\begin{enumerate}
\item
\textbf{Frobenius-norm baseline (\texttt{randQB\_EI}~\cite{GULI}):} the SOTA adaptive QB method. Its error estimate relies on a ``top-down'' subtraction of squared Frobenius norms, which is prone to catastrophic cancellation in finite precision and typically limits its reliable tolerance to about $\mathcal{O}(\sqrt{\varepsilon_{\text{mach}}})$ (\cref{rmk:randQB_ei}). We show that our matrix-free variants remove this limitation, remaining stable down to machine precision.

\item
\textbf{Spectral-norm baseline (\texttt{randQB\_HMT}~\cite[Algorithm 4.2]{halko2011finding}):} the SOTA adaptive randomized range finder of \textbf{H}alko, \textbf{M}artinsson, and \textbf{T}ropp. It uses only the forward operator and returns only the orthonormal basis $\mq$, so it is adjoint-free. Its \emph{a posteriori} error estimate is provably reliable but pessimistic (\cite[Remark 4.1]{halko2011finding}), so it typically overestimates the rank.

\item
\textbf{Theoretical optimum (truncated SVD, t-SVD):} for any tolerance $\varepsilon$, the Eckart--Young theorem identifies the truncated SVD as the rank-$k$ approximation that minimizes the error in both the Frobenius and spectral norms. The SVD cannot be applied in the matrix-free setting without first forming the entire matrix, but we include it as a ``gold standard'' that measures the \emph{rank over-determination factor} of our algorithms --- how many extra dimensions the proposed randomized methods need to match the error of the optimal (but infeasible) SVD.
\end{enumerate}

\paragraph{Test matrices} We test our methods on matrices exhibiting a range of decay profiles, from rapid exponential decay (typical of smoothing operators) to slow algebraic decay (common in certain statistical datasets) to S-shape decay. Let $\ma = \mU\mathbf{\Sigma}\mv^\top$,  where $\mU, \mv\in \mathbb{R}^{1024 \times 1024}$ are random Gaussian matrices and $\mathbf{\Sigma} \in \mathbb{R}^{1024 \times 1024}$ is diagonal with entries defined below:

\begin{itemize}
    \item \textbf{Matrix 1:} $\sigma_i = \text{exp}(-i/2)$  \qquad \quad $1\leq i \leq 1024$ \hfill Exponential decay

    \item \textbf{Matrix 2:} $\sigma_i = 1/i^p$ \qquad \quad \quad \quad $1\leq i \leq 1024$ \hfill Polynomial decay

    \item \textbf{Matrix 3:}
    $\sigma_i =\begin{cases}
10^{-6(i-1)/399}, & 1 \le i \le 400, \\[6pt]
10^{\,a\, (i-400)^{-\alpha} + b}, & 401 \leq i \leq 1024,
\end{cases}$
\hfill S-shape decay \\
where $\alpha = 10^{-0.8} \approx 0.158$, and the constants $a$ and $b$ are fixed by requiring the tail to join the first piece continuously at $\sigma_{401} = 10^{-6}$ and to reach $\sigma_{1024} = 5\varepsilon_{\text{mach}}$; this gives $a \approx 14$ and $b \approx -20$.
\end{itemize}

The two exponents for Matrix~2 are chosen to stress-test the baselines, not to constrain our methods. For the Frobenius-norm experiments we take $p=4$, so that part of the spectrum falls below $\sqrt{\varepsilon_{\text{mach}}}$ and thereby exposes the precision wall of \texttt{randQB\_EI}. For the spectral-norm experiments we take $p=2$, so that the numerical ranks are relatively large and all methods require many iterations to converge, which sharpens the contrast between the proposed methods and \texttt{randQB\_HMT}. These choices are designed to highlight the differences between the methods and are not limitations of the proposed algorithms.

In all cases, the tolerance $\tau$ is specified as a relative threshold: $\tau = \varepsilon \|\mathbf{A}\|,$ for a wide range of $\varepsilon$, where $\|\cdot\|$ denotes either the Frobenius or spectral norm, depending on the algorithm under test. The relative criterion is more demanding than an absolute one, as the algorithm must additionally estimate $\|\mathbf{A}\|$ on the fly; our methods do so matrix-free from the same sketches (\cref{rmk:relative tolerance}), whereas \texttt{randQB\_EI} computes $\|\mathbf{A}\|_{\mathrm{F}}$ explicitly from $\mathbf{A}$ and is thus not matrix-free.

We use a block size of $b = 32$ throughout, except for Matrix~1, whose small numerical rank calls for $b = 16$ to keep the iteration counts comparable across matrices.
For each method we report the achieved (exact) relative error in the relevant norm. For \texttt{randQB\_EI} and the matrix-free variants (\texttt{randQB\_MF\_Fro}, \texttt{randQB\_MF\_Spec}), it is $\|\ma - \mq\mb\| / \|\ma\|$, where $\mq$ and $\mb$ are the  computed pair. The adjoint-free variants (\texttt{randQB\_AF\_Fro}, \texttt{randQB\_AF\_Spec}) and \texttt{randQB\_HMT} never form the projection $\mb = \mq^\top\ma$, so their error is evaluated as $\|\ma - \mq\mq^\top\ma\| / \|\ma\|$. For the t-SVD it is the optimal error of the best rank-$k$ approximation.

\subsection{Comparison in the Frobenius Norm}

We compare four methods: the truncated SVD (t-SVD; the optimal, but not matrix-free, baseline), \texttt{randQB\_EI} (SOTA, but not matrix-free), and the two proposed methods \texttt{randQB\_MF\_Fro} and \texttt{randQB\_AF\_Fro}. \Cref{fig:fro_exp,fig:fro_poly,fig:fro_sshape} report the results for the exponential, polynomial, and S-shaped spectra (Matrices~1--3), respectively. Each figure shows the computed rank versus the prescribed relative tolerance $\varepsilon$ (left) and the per-iteration error indicator of each method (right), together with a table of the computed rank $k$ and the achieved (exact) relative error in the Frobenius norm (evaluated as described in \cref{sec:setup}; tabulated at every other tolerance for brevity, while the panels use the full range).

\begin{figure}
    \centering
    \begin{subfigure}[b]{0.42\textwidth}
        \centering
        \includegraphics[width=\textwidth]{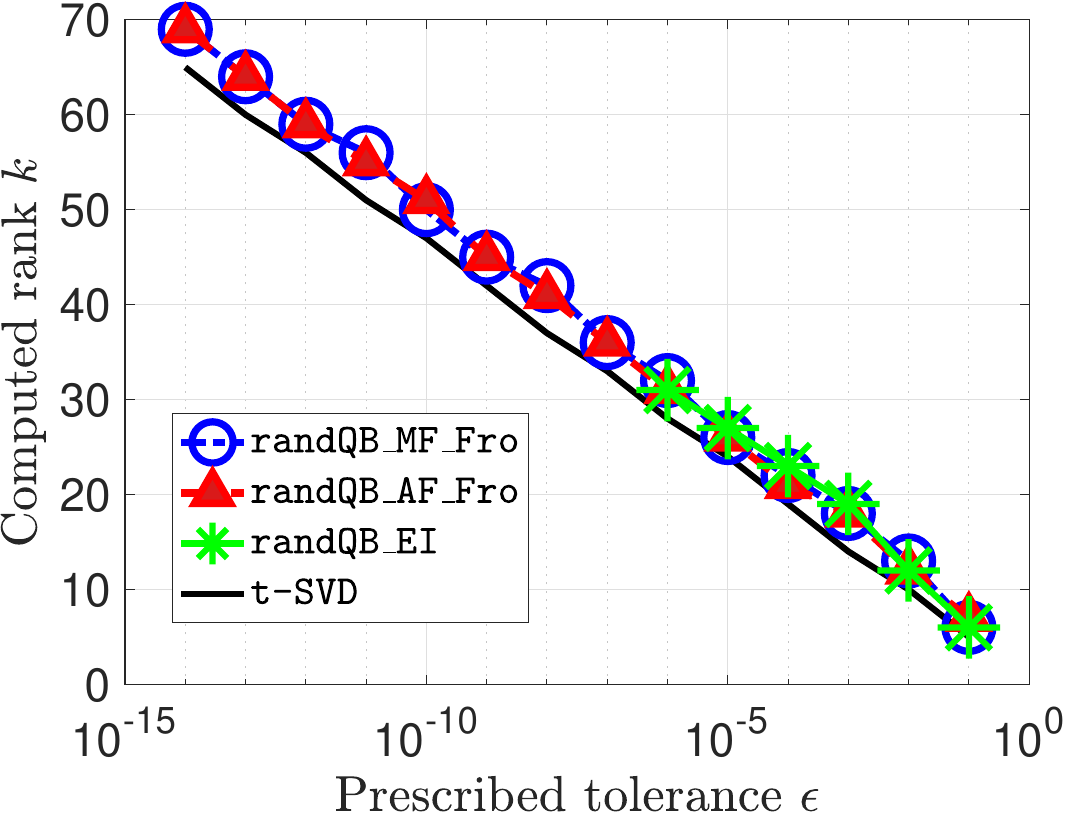}
    \end{subfigure}
    \hfill
    \begin{subfigure}[b]{0.42\textwidth}
        \centering
        \includegraphics[width=\textwidth]{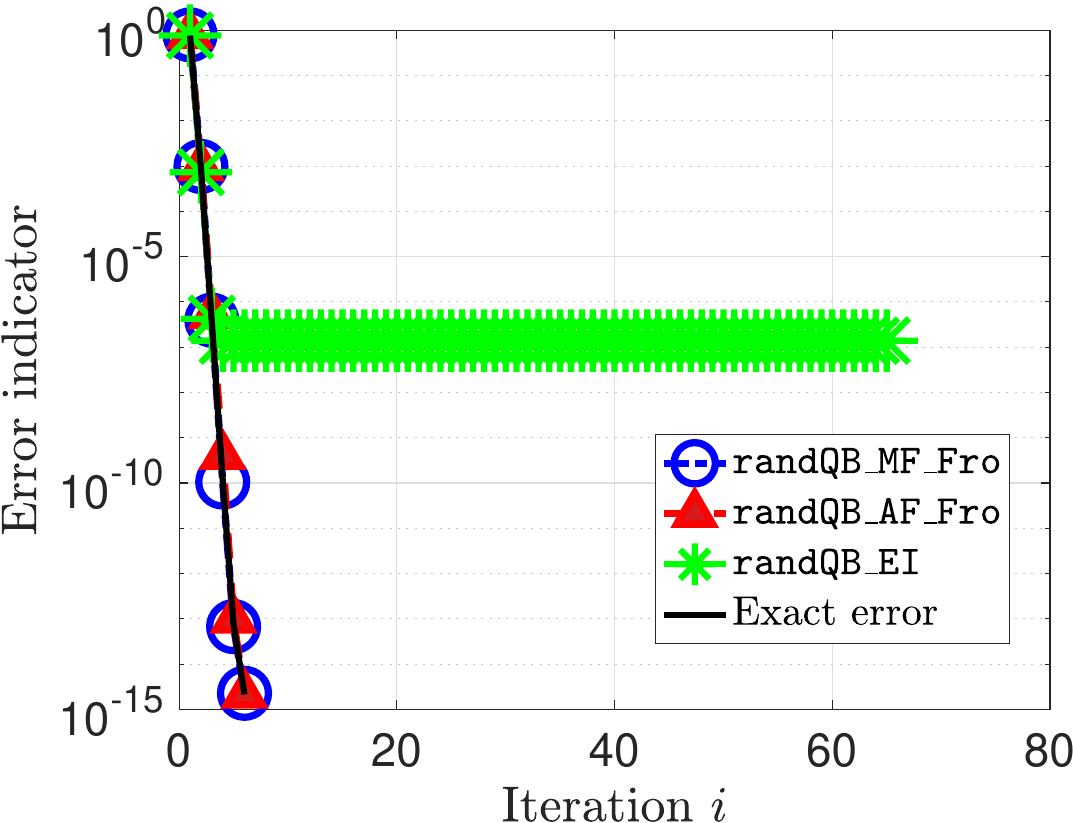}
    \end{subfigure}

    \vspace{1em}

    {\small
    \begin{tabular}{c|cc|cc|cc|cc}
    \hline
    $\epsilon$ & \multicolumn{2}{c}{t-SVD} & \multicolumn{2}{c}{randQB\_EI} & \multicolumn{2}{c}{randQB\_MF\_Fro} & \multicolumn{2}{c}{randQB\_AF\_Fro} \\
     & $k$ & rel.\ err & $k$ & rel.\ err & $k$ & rel.\ err & $k$ & rel.\ err \\ \hline
    1e-01 & 5 & 8.21e-02 & 6 & 8.79e-02 & 6 & 8.85e-02 & 7 & 5.59e-02 \\
    1e-03 & 14 & 9.12e-04 & 19 & 5.64e-04 & 18 & 7.86e-04 & 18 & 8.53e-04 \\
    1e-05 & 24 & 6.14e-06 & 27 & 7.86e-06 & 26 & 9.50e-06 & 26 & 8.44e-06 \\
    1e-07 & 33 & 6.83e-08 & \multicolumn{2}{c|}{\textcolor{red}{fail}} & 36 & 6.63e-08 & 36 & 9.51e-08 \\
    1e-09 & 42 & 7.58e-10 & \multicolumn{2}{c|}{\textcolor{red}{fail}} & 45 & 9.13e-10 & 45 & 8.28e-10 \\
    1e-11 & 51 & 8.42e-12 & \multicolumn{2}{c|}{\textcolor{red}{fail}} & 56 & 4.84e-12 & 55 & 5.04e-12 \\
    1e-13 & 60 & 9.40e-14 & \multicolumn{2}{c|}{\textcolor{red}{fail}} & 64 & 8.94e-14 & 64 & 8.13e-14 \\
    \hline
    \end{tabular}
    }

    \caption{Matrix~1 (exponential, fast spectral decay). Below its precision wall, \texttt{randQB\_EI}'s
    error indicator stops decreasing (right panel) and never meets the tolerance; those entries
    are marked \textcolor{red}{\texttt{fail}}.}
    \label{fig:fro_exp}
\end{figure}

\begin{figure}
    \centering
    \begin{subfigure}[b]{0.42\textwidth}
        \centering
        \includegraphics[width=\textwidth]{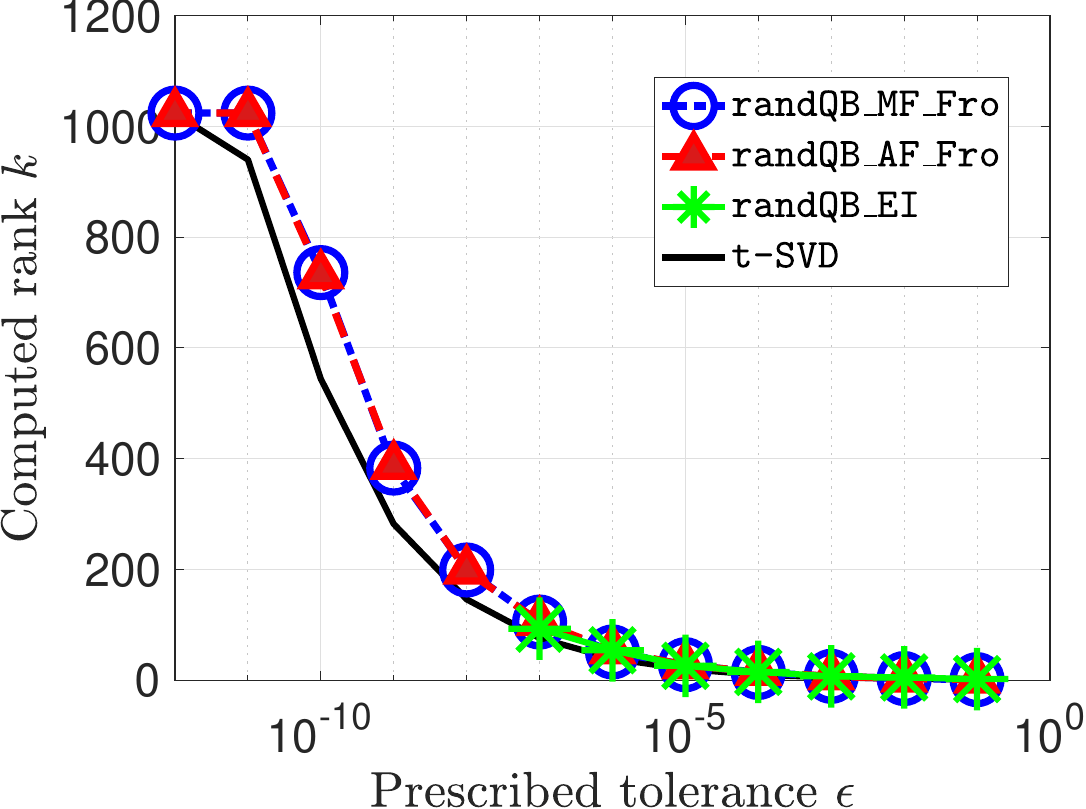}
    \end{subfigure}
    \hfill
    \begin{subfigure}[b]{0.42\textwidth}
        \centering
        \includegraphics[width=\textwidth]{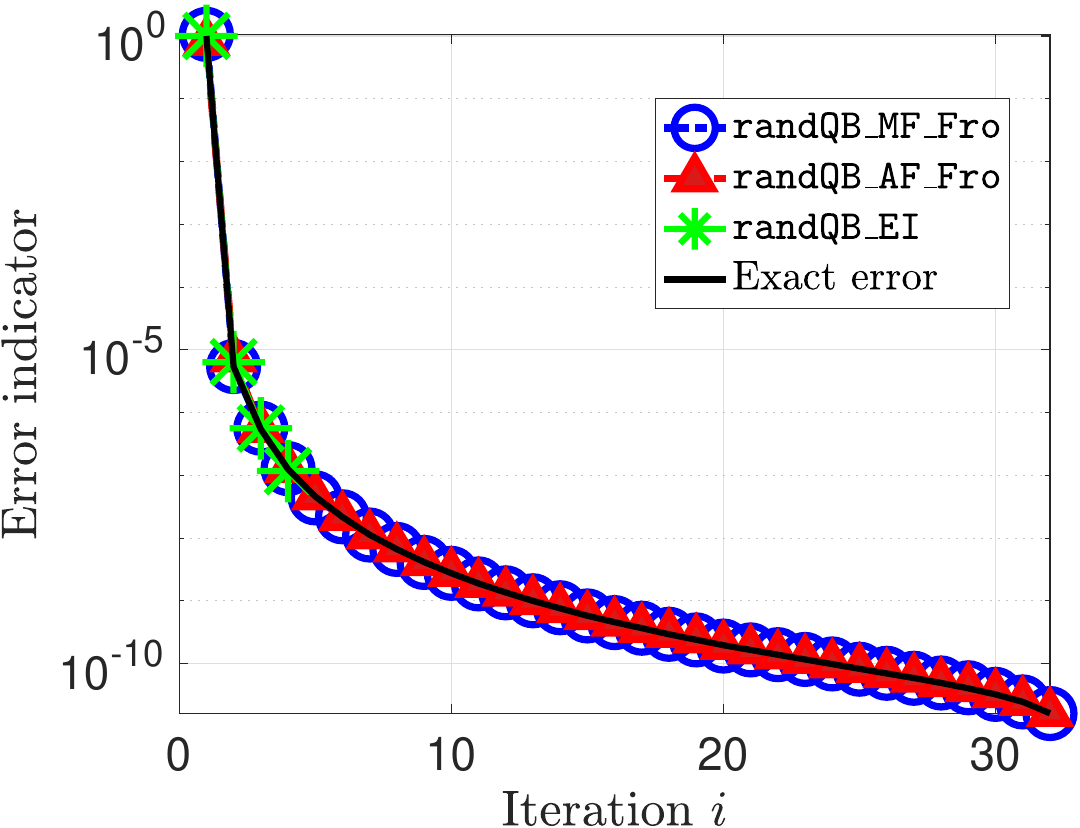}
    \end{subfigure}

    \vspace{1em}

    {\small
    \begin{tabular}{c|cc|cc|cc|cc}
    \hline
    $\epsilon$ & \multicolumn{2}{c}{t-SVD} & \multicolumn{2}{c}{randQB\_EI} & \multicolumn{2}{c}{randQB\_MF\_Fro} & \multicolumn{2}{c}{randQB\_AF\_Fro} \\
     & $k$ & rel.\ err & $k$ & rel.\ err & $k$ & rel.\ err & $k$ & rel.\ err \\ \hline
1e-01 & 1 & 6.37e-02 & 2 & 1.87e-02 & 1 & 8.11e-02 & 1 & 7.85e-02 \\
1e-03 & 5 & 9.32e-04 & 7 & 6.43e-04 & 7 & 7.18e-04 & 7 & 9.38e-04 \\
1e-05 & 20 & 9.64e-06 & 26 & 8.90e-06 & 28 & 9.76e-06 & 27 & 9.47e-06 \\
1e-07 & 76 & 9.63e-08 & \multicolumn{2}{c|}{\textcolor{red}{fail}} & 105 & 8.93e-08 & 105 & 9.40e-08 \\
1e-09 & 282 & 9.95e-10 & \multicolumn{2}{c|}{\textcolor{red}{fail}} & 383 & 9.98e-10 & 387 & 9.70e-10 \\
1e-11 & 940 & 9.92e-12 & \multicolumn{2}{c|}{\textcolor{red}{fail}} & 1024 & 1.51e-15 & 1024 & 1.49e-15 \\
\hline
    \end{tabular}
    }

    \caption{Matrix~2 (polynomial, slow spectral decay). Here the
    quantity $\|\ma\|_{\mathrm{F}}^2 - \|\mb\|_{\mathrm{F}}^2$ that \texttt{randQB\_EI} subtracts turns
    \emph{negative} below the precision wall, so its indicator is undefined and can never reach
    the tolerance (marked \textcolor{red}{\texttt{fail}}).
}
    \label{fig:fro_poly}
\end{figure}

\begin{figure}
    \centering
    \begin{subfigure}[b]{0.42\textwidth}
        \centering
        \includegraphics[width=\textwidth]{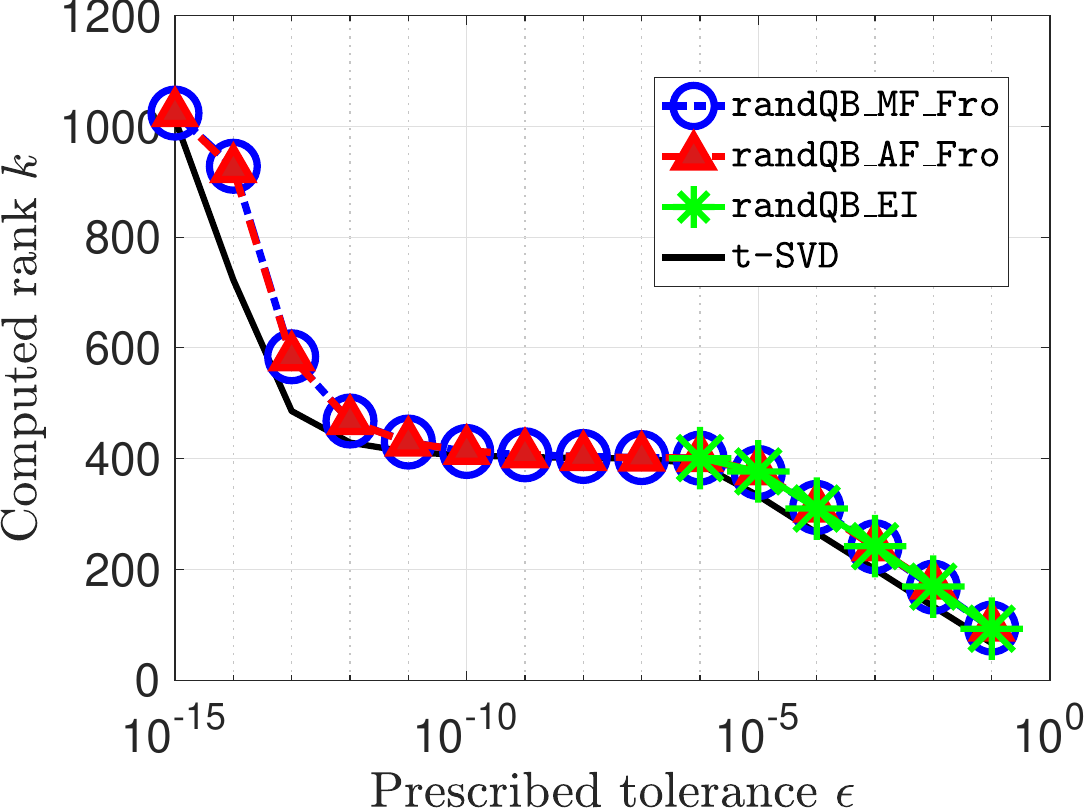}
    \end{subfigure}
    \hfill
    \begin{subfigure}[b]{0.42\textwidth}
        \centering
        \includegraphics[width=\textwidth]{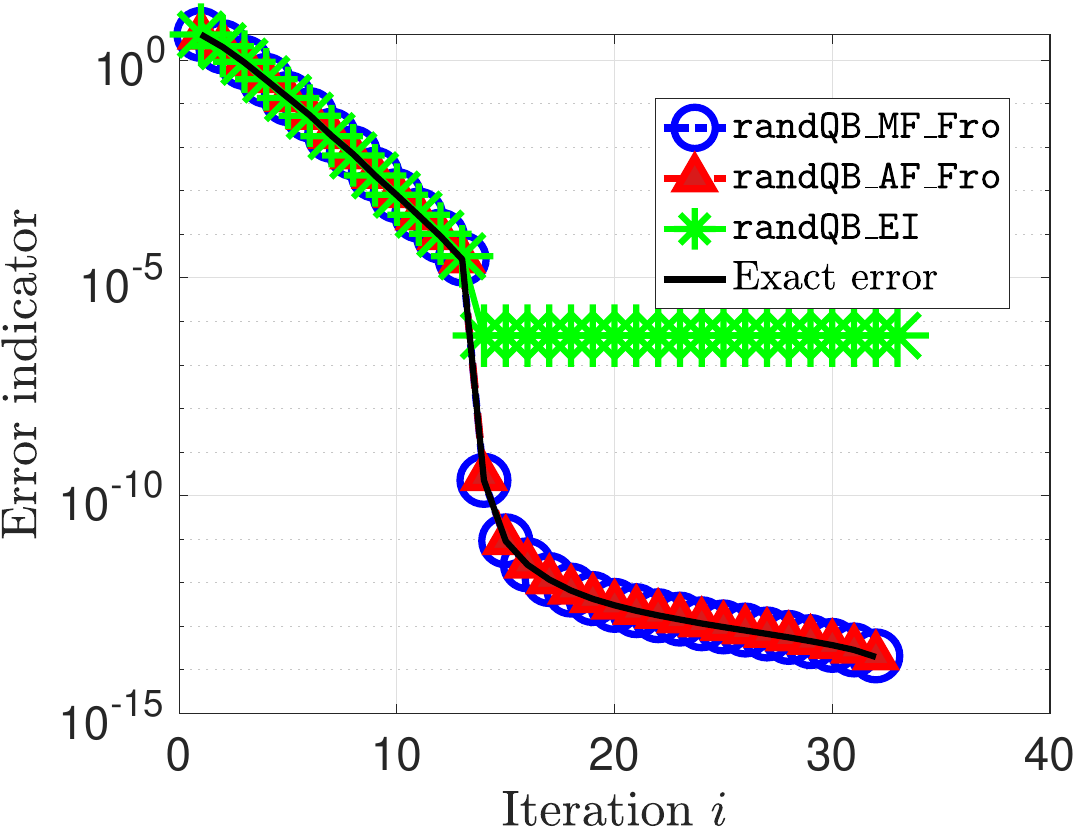}
    \end{subfigure}

    \vspace{1em}

    {\small
    \begin{tabular}{c|cc|cc|cc|cc}
    \hline
    $\epsilon$ & \multicolumn{2}{c}{t-SVD} & \multicolumn{2}{c}{randQB\_EI} & \multicolumn{2}{c}{randQB\_MF\_Fro} & \multicolumn{2}{c}{randQB\_AF\_Fro} \\
     & $k$ & rel.\ err & $k$ & rel.\ err & $k$ & rel.\ err & $k$ & rel.\ err \\ \hline
    1e-01 & 67 & 9.83e-02 & 93 & 9.91e-02 & 94 & 9.98e-02 & 95 & 9.47e-02 \\
    1e-03 & 200 & 9.83e-04 & 242 & 9.70e-04 & 241 & 9.66e-04 & 240 & 1.02e-03 \\
    1e-05 & 333 & 9.78e-06 & 377 & 9.73e-06 & 375 & 1.02e-05 & 377 & 9.94e-06 \\
    1e-07 & 401 & 9.17e-09 & \multicolumn{2}{c|}{\textcolor{red}{fail}} & 402 & 6.77e-08 & 402 & 2.99e-08 \\
    1e-09 & 403 & 4.95e-10 & \multicolumn{2}{c|}{\textcolor{red}{fail}} & 407 & 7.92e-10 & 408 & 8.01e-10 \\
    1e-11 & 412 & 9.18e-12 & \multicolumn{2}{c|}{\textcolor{red}{fail}} & 430 & 9.22e-12 & 429 & 9.85e-12 \\
    1e-13 & 486 & 9.81e-14 & \multicolumn{2}{c|}{\textcolor{red}{fail}} & 584 & 1.00e-13 & 583 & 1.04e-13 \\
    1e-15 & 1013 & 5.16e-15 & \multicolumn{2}{c|}{\textcolor{red}{fail}} & 1024 & 3.31e-15 & 1024 & 2.40e-14 \\
    \hline
    \end{tabular}
    }

    \caption{Matrix~3 (S-shaped spectral decay). Below its precision wall, \texttt{randQB\_EI}'s
    error indicator stops decreasing (right panel) and never meets the tolerance; those entries
    are marked \textcolor{red}{\texttt{fail}}.}
    \label{fig:fro_sshape}
\end{figure}

\paragraph{Accuracy and tolerance range}
Across all three spectra, the proposed \texttt{randQB\_MF\_Fro} matches \texttt{randQB\_EI} wherever the latter is reliable, while remaining accurate over a far wider range of tolerances. Its achieved relative error tracks the prescribed tolerance almost perfectly, staying just at or below $\varepsilon$ all the way down to machine precision. The adjoint-free variant \texttt{randQB\_AF\_Fro} is slightly less stable but still attains an achieved error of the same order as $\varepsilon$ throughout. In contrast, \texttt{randQB\_EI} is reliable only for $\varepsilon \gtrsim \mathcal{O}(\sqrt{\varepsilon_{\text{mach}}}) $; below that wall it breaks down (entries marked \textcolor{red}{\texttt{fail}} in the tables).

\paragraph{Rank efficiency}
The ranks produced by both proposed methods stay close to the optimal ranks of the t-SVD (typically within a handful of extra dimensions) so the matrix-free, randomized rank determination costs little. Unlike the t-SVD, however, the proposed methods access $\ma$ only through matrix-vector products and never form a dense factorization, which makes them far cheaper for large operators.

\paragraph{Error indicators and the breakdown of \texttt{randQB\_EI}}
The methods differ in how they monitor the residual error $\|\mathbf{A} - \mathbf{Q}\mathbf{B}\|_{\mathrm{F}}$. \texttt{randQB\_EI} uses the identity $\|\mathbf{E}\|_{\mathrm{F}}^2 = \|\mathbf{A}\|_{\mathrm{F}}^2 - \|\mathbf{B}\|_{\mathrm{F}}^2$. While exact in infinite precision, this ``top-down'' subtraction is highly susceptible to catastrophic cancellation once the residual is small relative to $\|\mathbf{A}\|_{\mathrm{F}}$, which (as noted in \cite[Theorem 3]{GULI}) restricts the reliable tolerance to roughly $\varepsilon \geq 2.7 \times 10^{-7}$ in double precision. Below this wall the method breaks down in one of two ways: either (i) the tracked quantity $\|\ma\|_{\mathrm{F}}^2 - \|\mb\|_{\mathrm{F}}^2$ turns \emph{negative}, so the indicator is undefined; or (ii) the indicator stops decreasing near $\mathcal{O}(\sqrt{\varepsilon_{\text{mach}}})$ and the stopping criterion is never met, so the iteration runs to its maximum number of steps and returns a full-rank factorization with no low-rank approximation. Which behavior occurs is driven by the random sketch rather than the spectrum: either can arise for any test matrix, and the same matrix may exhibit one or the other across runs; the figures report one representative failure each. In contrast, \texttt{randQB\_MF\_Fro} (and \texttt{randQB\_AF\_Fro}) use the randomized indicator
$
\|\mathbf{Y}\|_{\mathrm{F}} = \| ( \mathbf{A} - \mathbf{Q}\mathbf{B}) \mathbf{\Omega} \|_{\mathrm{F}} \approx \|\mathbf{A} - \mathbf{Q}\mathbf{B}\|_{\mathrm{F}},
$
where $\mathbf{\Omega}$ is a normalized Gaussian matrix (i.i.d.\ $\mathcal{N}(0,1/b)$ entries) with $b$ columns. Being a norm of a sketched residual, this indicator is inherently non-negative and free of the cancellation that plagues the energy-subtraction approach, so it reliably drives the iteration down to tolerances near machine precision.

\subsection{Comparison in the Spectral Norm}
We now extend the comparison to the spectral norm, evaluating four methods: the t-SVD (the optimal, but not matrix-free, baseline), the SOTA range finder \texttt{randQB\_HMT}~\cite[Algorithm~4.2]{halko2011finding}, and the two proposed methods \texttt{randQB\_MF\_Spec} and \texttt{randQB\_AF\_Spec}. \Cref{fig:spec_exp,fig:spec_poly,fig:spec_sshape} report the results for the exponential, polynomial, and S-shaped spectra (Matrices~1--3), respectively. Each figure shows the computed rank versus the prescribed relative tolerance $\varepsilon$ (left) and the per-iteration error indicator of each method at the smallest tolerance (right), together with a table of the computed rank $k$ and the achieved (exact) relative error in the spectral norm (evaluated as described in \cref{sec:setup}; tabulated at every other tolerance for brevity, while the panels use the full range).

\begin{figure}
    \centering
    \begin{subfigure}[b]{0.42\textwidth}
        \centering
        \includegraphics[width=\textwidth]{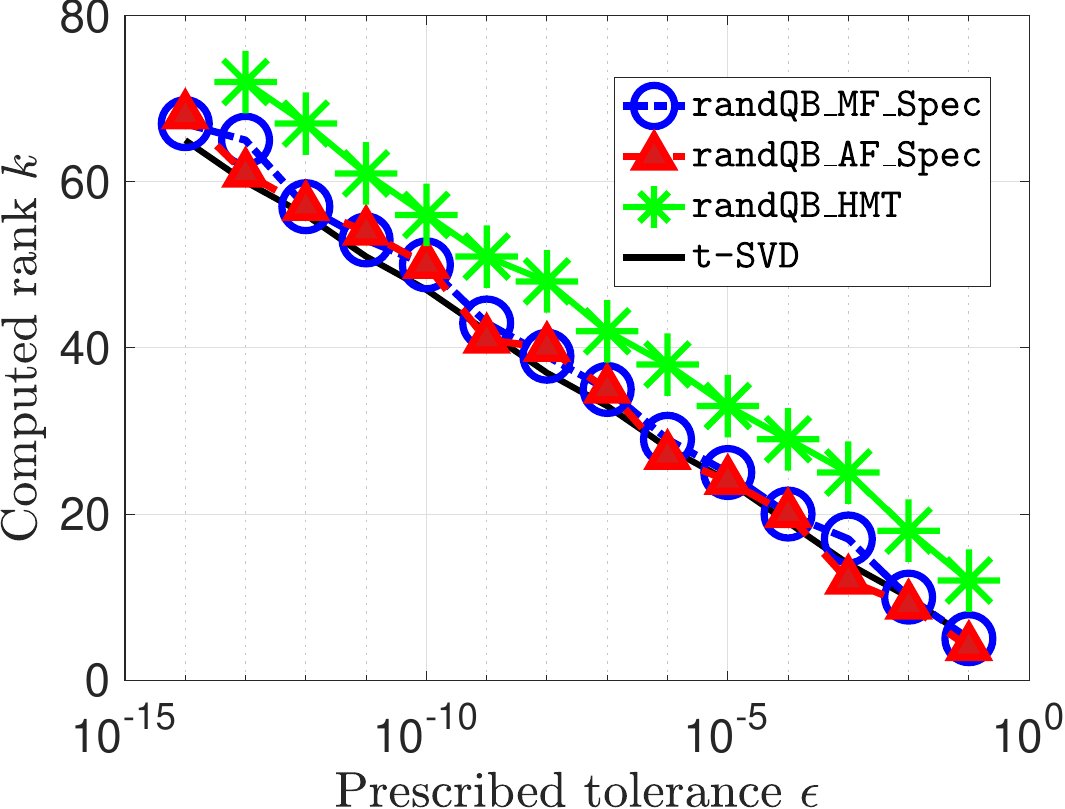}
    \end{subfigure}
    \hfill
    \begin{subfigure}[b]{0.42\textwidth}
        \centering
        \includegraphics[width=\textwidth]{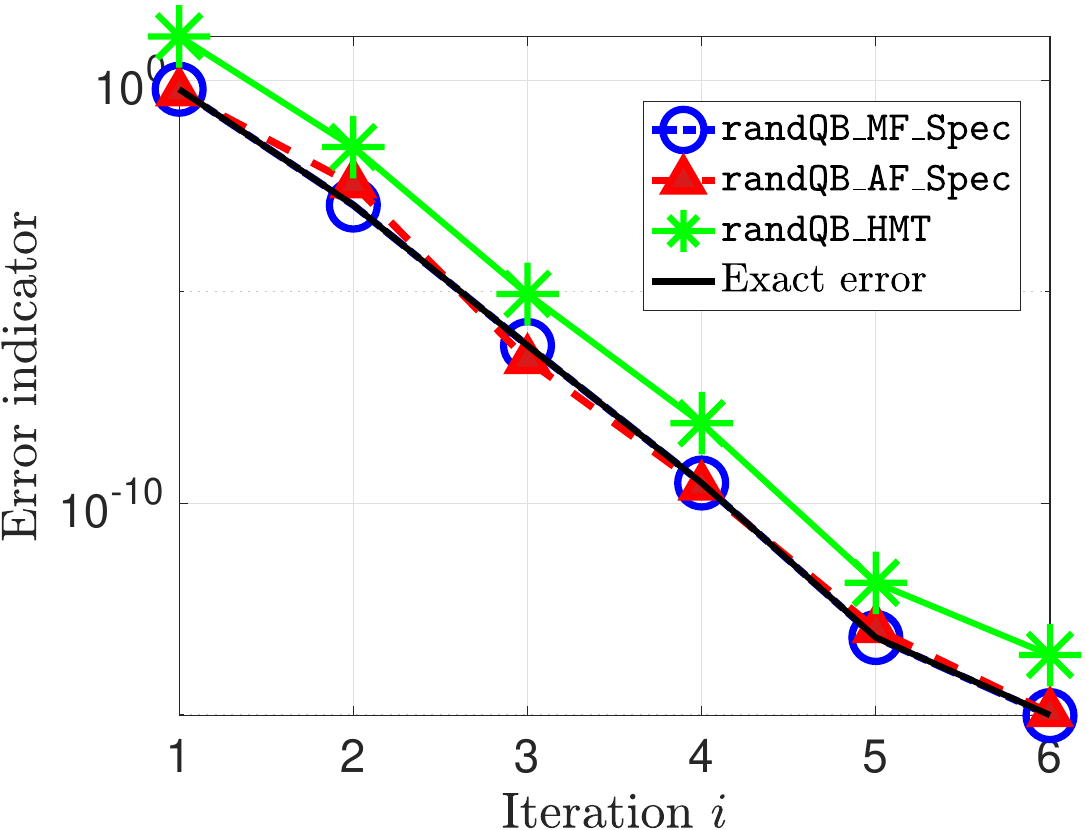}
    \end{subfigure}

    \vspace{1em}

    {\small
    \begin{tabular}{c|cc|cc|cc|cc}
    \hline
    $\epsilon$ & \multicolumn{2}{c}{t-SVD} & \multicolumn{2}{c}{randQB\_HMT} & \multicolumn{2}{c}{randQB\_MF\_Spec} & \multicolumn{2}{c}{randQB\_AF\_Spec} \\
     & $k$ & rel.\ err & $k$ & rel.\ err & $k$ & rel.\ err & $k$ & rel.\ err \\ \hline
    1e-01 & 5 & 8.21e-02 & 12 & 7.08e-03 & 5 & 8.21e-02 & 4 & 2.02e-01 \\
    1e-03 & 14 & 9.12e-04 & 25 & 1.43e-05 & 17 & 6.47e-04 & 12 & 5.24e-03 \\
    1e-05 & 24 & 6.14e-06 & 33 & 2.93e-07 & 25 & 4.59e-06 & 24 & 1.54e-05 \\
    1e-07 & 33 & 6.83e-08 & 42 & 5.22e-09 & 35 & 8.49e-08 & 35 & 6.00e-08 \\
    1e-09 & 42 & 7.58e-10 & 51 & 2.64e-11 & 43 & 9.43e-10 & 41 & 2.85e-09 \\
    1e-11 & 51 & 8.42e-12 & 61 & 5.73e-13 & 53 & 8.96e-12 & 54 & 6.34e-12 \\
    1e-13 & 60 & 9.36e-14 & 72 & 5.38e-15 & 65 & 4.68e-14 & 61 & 1.58e-13 \\
    \hline
    \end{tabular}
    }

    \caption{Matrix~1 (exponential, fast spectral decay).}
    \label{fig:spec_exp}
\end{figure}

\begin{figure}
    \centering
    \begin{subfigure}[b]{0.42\textwidth}
        \centering
        \includegraphics[width=\textwidth]{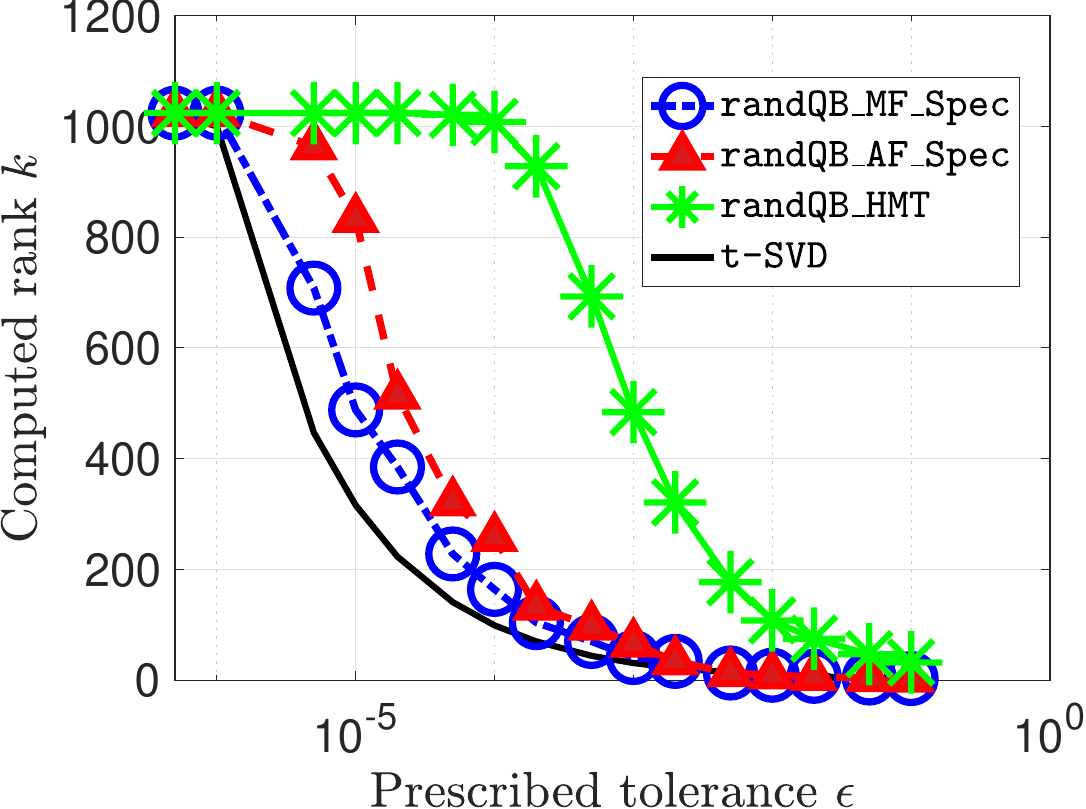}
    \end{subfigure}
    \hfill
    \begin{subfigure}[b]{0.42\textwidth}
        \centering
        \includegraphics[width=\textwidth]{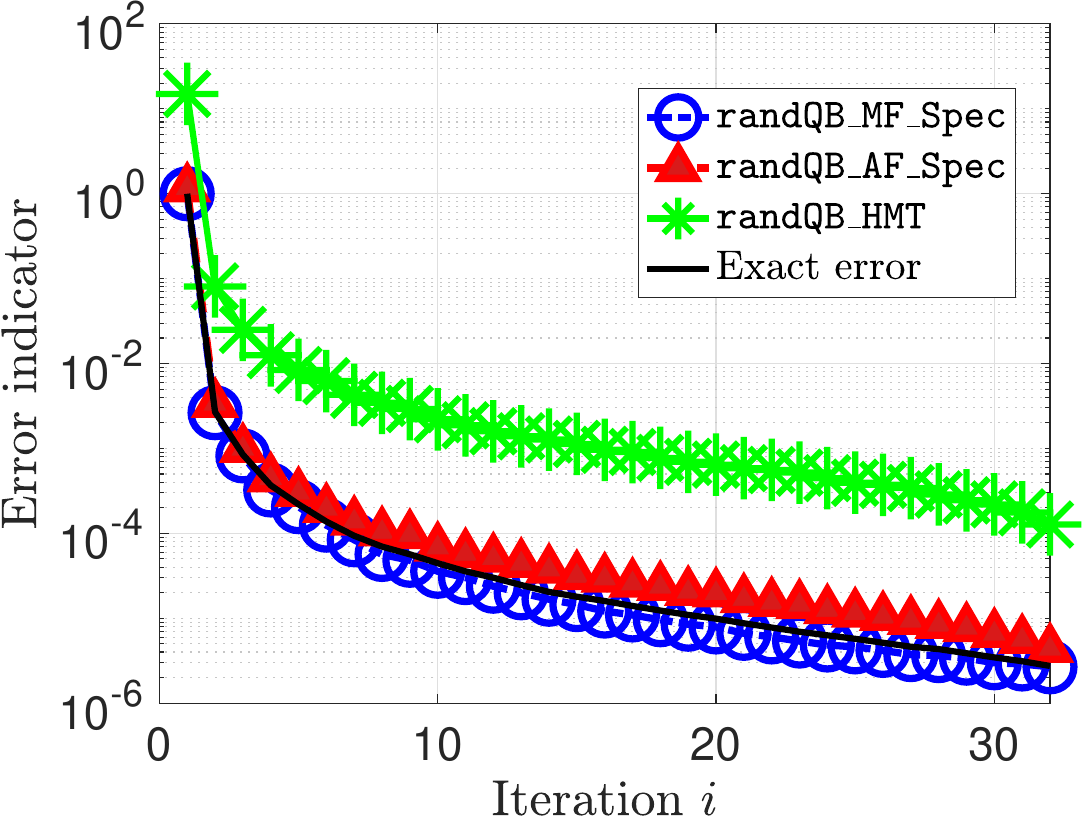}
    \end{subfigure}

    \vspace{1em}

    {\small
    \begin{tabular}{c|cc|cc|cc|cc}
    \hline
    $\epsilon$ & \multicolumn{2}{c}{t-SVD} & \multicolumn{2}{c}{randQB\_HMT} & \multicolumn{2}{c}{randQB\_MF\_Spec} & \multicolumn{2}{c}{randQB\_AF\_Spec} \\
     & $k$ & rel.\ err & $k$ & rel.\ err & $k$ & rel.\ err & $k$ & rel.\ err \\ \hline
    1e-01 & 3 & 6.25e-02 & 32 & 3.92e-03 & 3 & 6.25e-02 & 3 & 6.36e-02 \\
    2e-02 & 7 & 1.56e-02 & 75 & 5.57e-04 & 7 & 1.56e-02 & 6 & 2.21e-02 \\
    5e-03 & 14 & 4.44e-03 & 177 & 1.08e-04 & 13 & 5.16e-03 & 13 & 6.27e-03 \\
    1e-03 & 31 & 9.77e-04 & 484 & 1.53e-05 & 41 & 1.11e-03 & 66 & 7.66e-04 \\
    2e-04 & 70 & 1.98e-04 & 928 & 3.48e-06 & 104 & 2.44e-04 & 133 & 1.84e-04 \\
    5e-05 & 141 & 4.96e-05 & 1021 & 2.25e-06 & 228 & 6.25e-05 & 321 & 3.58e-05 \\
    1e-05 & 316 & 9.95e-06 & 1024 & 7.93e-16 & 488 & 1.42e-05 & 833 & 4.78e-06 \\
    1e-06 & 1000 & 9.98e-07 & 1024 & 6.50e-16 & 1024 & 1.15e-15 & 1024 & 1.51e-15 \\
    \hline
    \end{tabular}
    }

    \caption{Matrix~2 (polynomial, slow spectral decay). This slow decay inflates all the randomized ranks above the t-SVD optimum at the tighter tolerances.}
    \label{fig:spec_poly}
\end{figure}

\begin{figure}
    \centering
    \begin{subfigure}[b]{0.42\textwidth}
        \centering
        \includegraphics[width=\textwidth]{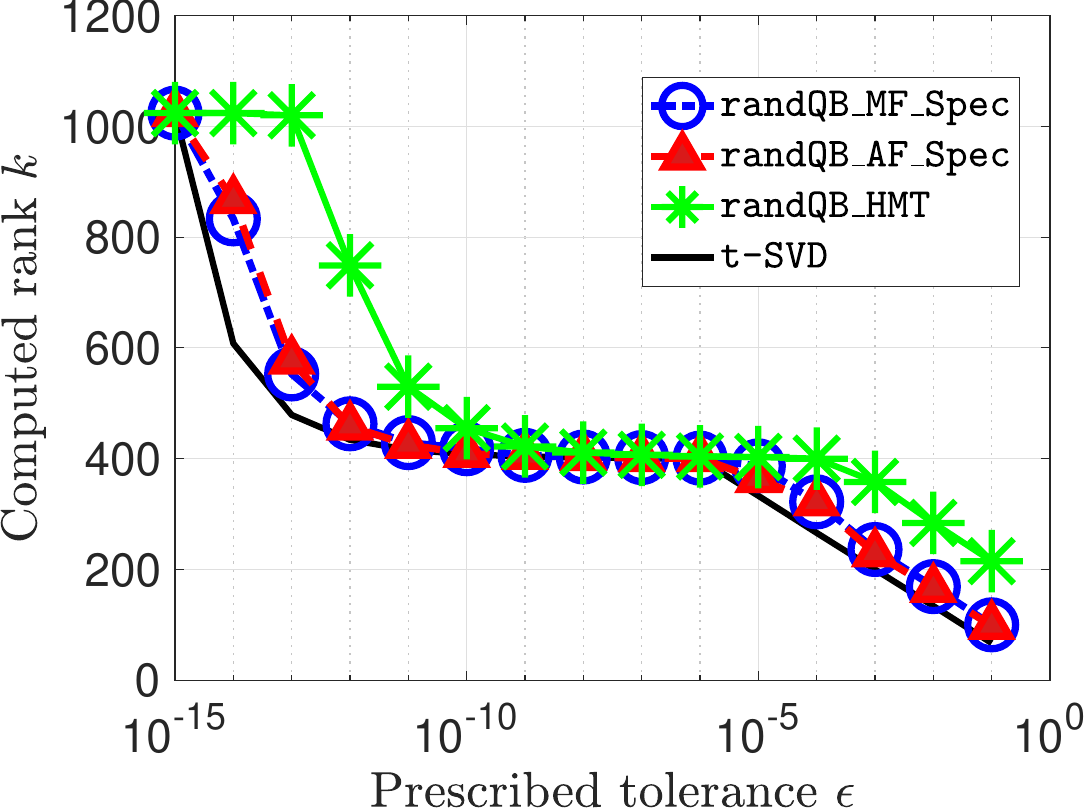}
    \end{subfigure}
    \hfill
    \begin{subfigure}[b]{0.42\textwidth}
        \centering
        \includegraphics[width=\textwidth]{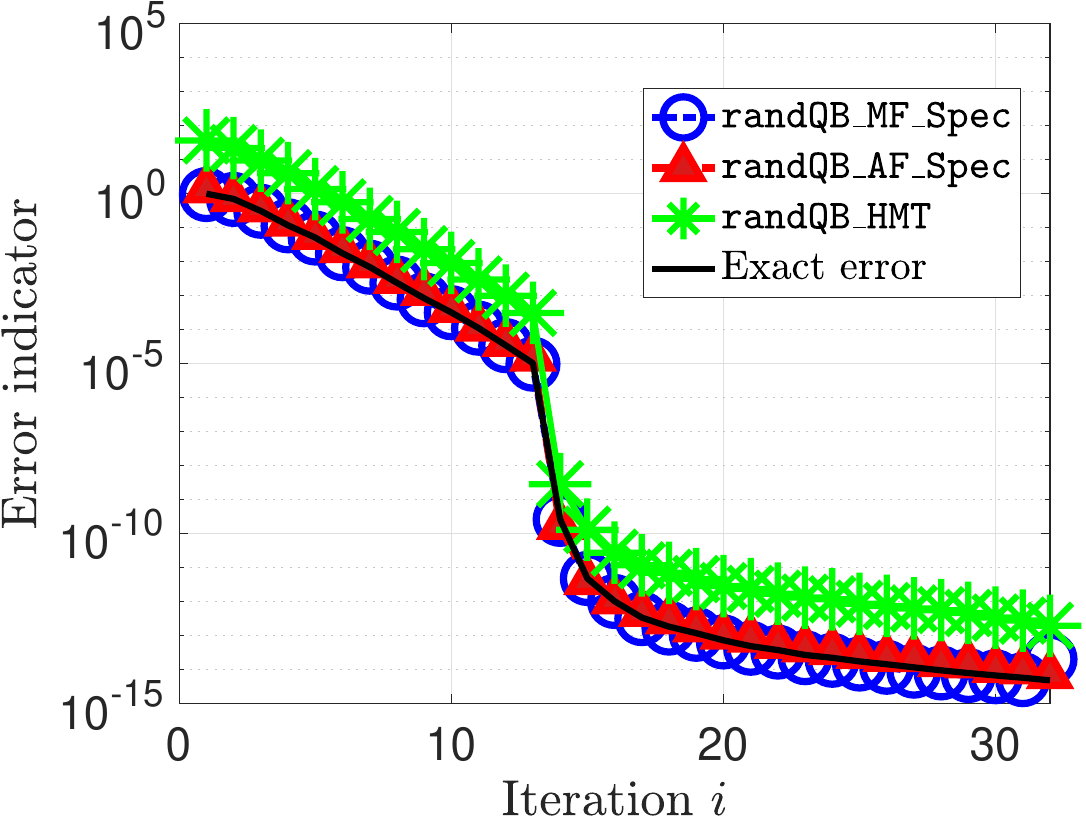}
    \end{subfigure}

    \vspace{1em}

    {\small
    \begin{tabular}{c|cc|cc|cc|cc}
    \hline
    $\epsilon$ & \multicolumn{2}{c}{t-SVD} & \multicolumn{2}{c}{randQB\_HMT} & \multicolumn{2}{c}{randQB\_MF\_Spec} & \multicolumn{2}{c}{randQB\_AF\_Spec} \\
     & $k$ & rel.\ err & $k$ & rel.\ err & $k$ & rel.\ err & $k$ & rel.\ err \\ \hline
    1e-01 & 67 & 9.83e-02 & 215 & 3.47e-03 & 100 & 9.06e-02 & 98 & 1.19e-01 \\
    1e-03 & 200 & 9.83e-04 & 358 & 3.29e-05 & 236 & 1.13e-03 & 229 & 2.00e-03 \\
    1e-05 & 333 & 9.83e-06 & 403 & 1.45e-07 & 387 & 8.52e-06 & 363 & 2.06e-05 \\
    1e-07 & 401 & 3.49e-08 & 407 & 1.98e-09 & 402 & 2.49e-08 & 401 & 2.83e-07 \\
    1e-09 & 404 & 7.03e-10 & 422 & 4.74e-11 & 406 & 8.80e-10 & 405 & 1.79e-09 \\
    1e-11 & 416 & 8.59e-12 & 530 & 2.59e-13 & 429 & 8.86e-12 & 425 & 1.73e-11 \\
    1e-13 & 479 & 9.74e-14 & 1020 & 3.38e-15 & 553 & 1.17e-13 & 577 & 9.78e-14 \\
    1e-15 & 1024 & 1.72e-14 & 1024 & 1.97e-14 & 1024 & 1.95e-14 & 1024 & 1.90e-14 \\
    \hline
    \end{tabular}
    }

    \caption{Matrix~3 (S-shaped spectral decay).}
    \label{fig:spec_sshape}
\end{figure}

\paragraph{Rank overestimation and comparison with \texttt{randQB\_HMT}}
A significant advantage of our method is its rank efficiency compared to the existing adaptive range finder. The existing method utilizes an error indicator that includes a safety constant $C = 10\sqrt{2/\pi}$. While this constant provides a rigorous probabilistic bound, it acts as a significant ``amplification'' factor that causes the algorithm to perceive the error as much larger than it actually is in many cases.

As illustrated in the left panels of \cref{fig:spec_exp,fig:spec_poly,fig:spec_sshape}, this pessimistic stopping criterion leads \texttt{randQB\_HMT} to consistently overestimate the rank required to satisfy the tolerance --- often inflating it to (near-)full rank at the tightest tolerances. In contrast, \texttt{randQB\_MF\_Spec} and \texttt{randQB\_AF\_Spec} avoid this over-determination, yielding numerical ranks that are far more compact --- close to the optimal t-SVD ranks for the rapidly-decaying spectra, and well below \texttt{randQB\_HMT}'s throughout.

\paragraph{Accuracy of the spectral error indicator}
A key feature of \texttt{randQB\_MF\_Spec} is the use of the look-ahead block norm $\|\mathbf{B}_i\|_2$ as a proxy for the total residual error. As shown in the right panels of \cref{fig:spec_exp,fig:spec_poly,fig:spec_sshape}, this indicator provides a remarkably tight estimate of the true residual error throughout the iteration process. The indicator remains reliable across many orders of magnitude.

\section{Conclusions}
We have introduced a family of adaptive, matrix-free randomized QB algorithms for high-precision low-rank approximation. Together, they address three bottlenecks of existing methods: excessive data passes, rank over-determination, and the precision wall inherent in Gramian-based error estimation.
Our numerical experiments demonstrate several advantages of the proposed framework:
\begin{itemize}
\item
\textbf{Robustness in the Frobenius Norm:} While the existing method \texttt{randQB\_EI} fails to satisfy tolerances below $\mathcal{O}(\sqrt{\varepsilon_{\text{mach}}})$, our matrix-free error indicator reliably tracks the residual error down to machine precision.

\item
\textbf{Efficiency in the Spectral Norm:}  By utilizing a look-ahead block norm indicator, we avoid the pessimistic constants found in an existing adaptive range finder, yielding significantly more compact ranks that closely match the optimal truncated SVD.

\item
\textbf{Reduced Data Movement:} When only the orthonormal basis is required, the adjoint-free variants dispense with the backward pass over the data, roughly halving the number of operator applications while degrading the approximation accuracy only slightly.

\end{itemize}

Several extensions are natural:

\paragraph{Tensor compression} Inserting the relative-tolerance range finder into the sequentially truncated HOSVD~\cite{vannieuwenhoven2012new,de2000multilinear} yields a matrix-free ST-HOSVD for the Tucker format. Standard randomized implementations estimate the per-mode truncation error with a Gramian-based indicator of the \texttt{randQB\_EI}~\cite{GULI} type, which is limited to tolerances of order $\mathcal{O}(\sqrt{\varepsilon_{\text{mach}}})$ (\cref{rmk:randQB_ei}); our indicators track the residual to machine precision, so the resulting ST-HOSVD overcomes this precision wall.

\paragraph{Hierarchical matrices} The construction of hierarchical ($\mathcal{H}$-)matrices, currently under active development, is a closely related target. Randomized black-box constructions build the representation from matrix--vector products, either by peeling off the off-diagonal blocks level by level~\cite{lin2011fast,martinsson2016compressing,Levitt_2024} or, more recently, by simultaneously compressing and factorizing the matrix~\cite{yesypenko2026randomized}. In all of these, the numerical rank fluctuates unpredictably across the many off-diagonal blocks, so fixed-rank range finders either over-allocate memory or lose fidelity, whereas the adaptive, fixed-tolerance methods of \cref{sec: 3} and \cref{sec: 4} set the rank block-by-block to a prescribed accuracy.

\paragraph{High-performance software} Because the algorithms are inherently blocked, they map naturally onto BLAS-3 kernels and modern parallel hardware, making a high-performance (GPU and distributed-memory) implementation a natural next step, building on recent HPC randomized-factorization efforts~\cite{heavner2023efficient, martinsson2019randutv, duersch2017randomized, dong2025robust, chen2025randomly}. A further direction is to extend these matrix-free techniques to streaming data.

\section*{Acknowledgments}
The authors used AI-based writing assistants to help edit language and improve the readability of the manuscript. All technical content was conceived, verified, and approved by the authors, who take full responsibility for the work.

\bibliographystyle{siamplain}

\end{document}